\newtheorem{theorem}{Theorem}
\newtheorem{lemma}[theorem]{Lemma}
\newtheorem{cor}[theorem]{Corollary}
\newtheorem{prop}[theorem]{Proposition}
\newtheorem{conj}[theorem]{Conjecture}
\theoremstyle{definition}
\newtheorem{nota}{Notation}
\newtheorem{remark}{Remark}
\newtheorem{example}{Example}
\def\x{\ensuremath{\boldsymbol{x}}}
\def\y{\boldsymbol{y}}
\def\s{\boldsymbol{s}}
\def\r{\boldsymbol{r}}
\def\f{\boldsymbol{f}}
\def\t{\boldsymbol{t}}
\def\m{\ensuremath{\boldsymbol{m}}}
\def\0{\mathbf{0}}
\def\N{\mathbb N}
\def\M{{\cal M}}
\def\R{{\cal R}}
\def\F{{\cal F}}
\def\rf{\right\rfloor}
\def\lf{\left\lfloor}
\newcommand{\rat}[2]{\lf\frac{#1}{#2}\rf}
\newtheorem{definition}{Definition}
\begin{document}
\title{Playability and arbitrarily large rat games}
\author{Aviezri S. Fraenkel,\\ Dept. of Computer Science and Applied Mathematics,\\ Weizmann Institute of Science,\\ Rehovot 76100, Israel; fraenkel@wisdom.weizmann.ac.il\\\\
Urban Larsson\\
Dept. of Industrial Engineering and Management,\\ Technion--Israel Institute of Technology,\\ Haifa 3200003, Israel; urban031@gmail.com}
\maketitle
\begin{abstract}
In 1973 Fraenkel discovered interesting sequences which split the positive integers. These sequences became famous, because of a related unsolved conjecture. Here we construct combinatorial games, with `playable' rulesets, with these sequences constituting the winning positions for the second player. Keywords: Combinatorial game, Fraenkel's conjecture, Impartial game, Normal play, Playability, Rational modulus, Splitting sequences.
\end{abstract}
\section{Introduction}
We study 2-player combinatorial heap games, which generate sequences of non-negative integer vectors in form of `winning strategies' a.k.a.  `P-positions'. The games are acyclic impartial combinatorial games, with alternating play. They have perfect information, and it is well known that one can partition the game positions into previous player winning positions (P-positions) and next player winning positions (N-positions). Here we use the normal play convention: a player unable to move, loses. %Two games are \emph{P-equivalent} if they have the same set of P-positions.

A basic problem in combinatorial game theory is to find an \emph{efficient winning strategy} for a game, and this boils down to two problems. Decide if a position is a P-position in polynomial time (in succinct input size), and if not a P-position, then find a winning move in polynomial time.

On the other hand, famous sequences can sometimes be associated with games. But this is a less obvious statement \cite{LarThesis}, \cite{Fra2004}: when does a sequence of vectors of non-negative integers have interesting game rules, such that a winning strategy is given by the sequence?

The early connection between Wythoff nim and complementary sequences of modulus the golden ratio and its square respectively, recently lead to research in finding game rules for any complemenatary pair of homogenous Beatty sequences of irrational modulus \cite{Bea1926} (generalizing Wythoff's sequences) \cite{LHF2011}. The solution is appealing, for example since it introduced a new operator to combinatorial game theory, but unfortunately it is not known whether the rules of game can be understood in polynomial time (in succinct input size). We arrive at a motivation for this paper: rulesets for combinatorial games should be suitable also for players without a degree in mathematics (even many games with great theoretical value have this property, e.g. \cite{BCG1982}), and to this purpose, in Section~\ref{sec:games}, we will define a concept of \emph{playability} for (multi-pile) heap games.  More overview: 

In Section~\ref{sec:rules}, we define \emph{succinct game rules}. The class of sequences of interest are the \emph{rat-vectors} (``rat" for rational modulus), and they are  defined in Section~\ref{sec:sequences}.

In Section~\ref{sec:games}, we define the \emph{grandiose rat games} as vector-subtraction games \cite{Gol1966}, %, and in Section~\ref{sec:existence}, we show that the P-positions for these games are the rat-vectors. 
and in Section~\ref{sec:playable}, we show that the games in Section~\ref{sec:rules} and Section~\ref{sec:games} are the same. 

In Section~\ref{sec:binary}, we give a matrix representation of the rat-vectors, and then, in Section~\ref{sec:ternary}, we build matrices for so-called \emph{shortcut-vectors} which connect the pairs of rat-vectors via subtraction. 

In Section~\ref{sec:approxnim}, we show that, in a specific sense, the games are close to the game of nim, and in Section~\ref{sec:rightshift}, we study a `right-shift' property of the rat vectors. 

At last, in the Appendix, we supply relevant figures, including data and conjectures for future work.

\section{Succinct rules for rat games}\label{sec:rules}
The rules of our \emph{succinct games} are as follows. Let $d\ge 2$ be an integer. We play on $d$-tuples (vectors) of non-negative integers $\x = (x_1,\ldots , x_d)$. The move options are \emph{vector subtractions}, and any vector subtraction $\x - \s = (x_1-s_1,\ldots , x_d-s_d) \ge \0$ is allowed, with $\s = (s_1,\ldots , s_d)$, except if it satisfies either of the following two properties, a or b :
\begin{itemize}
\item[a(i)] $s_d - 2^{d-1}$ is a multiple of $2^{d}-1$, and 
\item[a(ii)] for all $i\in \{2,\ldots , d\}$, $s_i-1\le 2s_{i-1}\le s_{i}$,
\item[] or
\item[b(i)] $s_d$ is a multiple of $2^{d}-1$, and 
\item[b(ii)] for all $i\in \{2,\ldots , d\}$, $s_i-1\le 2s_{i-1}\le s_{i}+1$.
\end{itemize}
Say, $d=3$ and the starting position is $(1,3,7)$. There is no move to $\0$, since condition b is satisfied by $s_3 = 7=2^3-1$ and since $2s_{i-1} = s_{i} + 1$, for $i = 2, 3$; $2\times 1+1 = 3$ and $2\times 3 +1 =7$. However, there is a move to $(1,2,4)$, since $(0, 1, 3)$ satisfies neither a nor b. But $(1,2,4)$ is the smallest (using lexicographic order) position of the forms a or b, which implies that the next move will be a losing move. Hence position $(1,3,7)$ is an N-position, a winning position for the current player. 

\section{Fraenkel's popular rat sequences}\label{sec:sequences}
 Let $\N=\{1,2,\ldots\}$ denote the positive integers, and let $\N_0 =\N \cup \{0\}$.
The \emph{rat sequences} are of the form $(\rat{3n}{2}, 3n-1)$, $(\rat{7n}{4}, \rat{7n}{2}-1, 7n-3)$, $(\rat{15n}{8}, \rat{15n}{4}-1, \rat{15n}{2}-3, 15n-7)$, and so on, for $n\in \N$. Thus, for each dimension $d\ge 2$, we code the vectors by $\r(n) = (r_{1}(n), \ldots , r_{d}(n))$, $n\in \N$, where
\begin{align}\label{eq:rat}
r_i(n) = \rat{(2^d-1)n}{2^{d-i}}-2^{i-1} + 1,
\end{align}
$i\in \{1,\ldots , d\}$. This representation will be referred to as the \emph{standard form}, and, for each dimension $d\ge 2$, we can think of it as an infinite row-matrix on $d$ columns, with rows splitting the positive integers (see Theorem~\ref{thm:split} below).  

Note that each column is \emph{arithmetic periodic} with saltus $2^d-1$ and period $2^{d-i}$. This property motivates us to introduce matrix representations (Section~\ref{sec:binary}) for the rat sequences, and thus study games and solutions in their finite representations.

\subsection{Rat history}
The results in this paper do not depend on the material of this subsection, which is included to show the historical and mathematical value of the rat-vectors. This history provides some motivation for this paper. 

So-called Beatty sequences \cite{Bea1926, BOHA27} are normally associated with irrational {\it moduli\/} $\alpha$, $\beta$. Recent studies deal with rational moduli $\alpha$, $\beta$. Clearly if $a/b\ne g/h$ are rational, then the sequences $\{\lfloor na/b\rfloor\}$ and $\{\lfloor ng/h\rfloor\}$ cannot be complementary, since $kbg\times a/b= kha\times g/h=kag$ for all $k\ge 1$. Also the former sequence is missing the integers $ka-1$ and the latter $kg-1$, so both are missing the integers $kag-1$ for all $k\ge 1$. However, complementarity can be maintained for the {\it nonhomogeneous\/} case: In \cite{Fra1969}, \cite{Obr2003}, necessary and sufficient conditions on $\alpha$, $\gamma$, $\beta$, $\delta$ are given so that the sequences $\{\lfloor n\alpha+\gamma\rfloor\}$ and $\{\lfloor n\beta+\delta\rfloor\}$ are complementary -- for both irrational moduli and rational moduli. We are not aware of any previous work in this direction, except that in Bang \cite{Ban1957} necessary and sufficient conditions are given for $\{\lfloor n\alpha\rfloor\}\supseteq\{\lfloor n\beta\rfloor\}$ to hold, both for the case $\alpha$, $\beta$ irrational and the case $\alpha$, $\beta$ rational. Results of this sort also appear in Niven \cite{Niv1963}, for the homogeneous case only. In Skolem \cite{Sko11957} and Skolem \cite{Sko21957} the homogeneous and nonhomogeneous cases are studied, but only for $\alpha$ and $\beta$ irrational. Fraenkel formulated  the following conjecture:\footnote{Erd\"os and Graham mention the conjecture in \cite{ErGr1980} (p.\ 19), as well as Graham et.~al.\ in \cite{Gra1978}. It is also a research problem in `Concrete Mathematics' by Graham, Knuth, Patashnik \cite{GrKnPa1989} (ch.~3), and is mentioned by Tijdeman \cite{Tij12000}, \cite{Tij2000}. In \cite{Fra1973} a weaker conjecture, implied by the full conjecture, is formulated and proved for special cases: If $d\ge 3$, then there are always two distinct moduli with integral ratio. Simpson proved it when one of the moduli (and hence the only one) is $\le 2$ \cite{Sim1991}.} 

\begin{conj}
If the vectors $(\lfloor n\alpha_i+\gamma_i\rfloor)_{i=1}^d$, $n\in\N$ split the positive integers with $d\ge 3$ and $\alpha_1<\alpha_2<\ldots <\alpha_d$, then
\begin{eqnarray}\label{conj}
\alpha_i=(2^d-1)/2^{d-i},\ i=1, \ldots, d.
\end{eqnarray}
\end{conj}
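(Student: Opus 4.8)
Since this is \emph{Fraenkel's conjecture}, which remains open for general $d$ (it is precisely the ``related unsolved conjecture'' mentioned in the abstract), I cannot offer a complete proof; instead I will describe the line of attack I would pursue and explain where it stalls. Note first that Theorem~\ref{thm:split} supplies only the \emph{existence} of the conjectured partition, namely that the rat-vectors with $\alpha_i=(2^d-1)/2^{d-i}$ do split $\N$; the conjecture is the converse \emph{uniqueness} statement, and the game-theoretic machinery of the paper does not bear on it directly. The plan is to work with the coloring of $\N$ induced by the hypothesized partition: assign to each $m\in\N$ the index $i$ for which $m=\lfloor n\alpha_i+\gamma_i\rfloor$ for some $n$. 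Because the $i$-th sequence has natural density $1/\alpha_i$, the first reduction is the density identity $\sum_{i=1}^d 1/\alpha_i=1$, obtained by counting elements of each sequence below a large bound. One checks that the conjectured moduli satisfy this, since $\sum_{i=1}^d 2^{d-i}/(2^d-1)=1$; the force of the conjecture is that this solution is the \emph{only} one.

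Next I would reduce to rational moduli. For $d=2$ Beatty's theorem produces genuinely irrational complementary pairs, but for $d\ge 3$ one expects a partition to force rationality, and the step I would attempt is a three-distance / equidistribution argument: if some $\alpha_i$ were irrational, the fractional parts $\{n\alpha_i+\gamma_i\}$ equidistribute, and one tries to show the induced coloring cannot remain consistent with a second, independent Beatty structure without creating a gap or an overlap. Granting rationality, write $\alpha_i=p_i/q_i$ in lowest terms; then each Beatty sequence is eventually a finite union of arithmetic progressions, and the partition becomes, up to a finite initial segment, an exact covering system of $\N$ by arithmetic progressions with common differences the numerators $p_i$.

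With the covering-system reformulation in hand, the engine is the classical result that in a minimal exact cover by arithmetic progressions the largest common difference cannot occur exactly once. This should pin down the top sequence $\alpha_d$ via the extremal modulus; I would then ``peel off'' the progressions of largest modulus and try to realize the remainder as a partition problem in dimension $d-1$, closing by induction on $d$ with the base cases $d=3$ and Simpson's $\alpha_1\le 2$ regime already settled. The structural target is that the balanced $d$-coloring of $\N$ is unique, which is exactly the rat-coloring of Theorem~\ref{thm:split}.

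The hard part, and the reason the conjecture is still open, is precisely this dimension-reduction step. Peeling off the densest or the coarsest sequence does not cleanly return a partition of the same type: the residual set, reindexed, is a partition into Beatty-like sequences but with shifted inhomogeneous terms and possibly non-reduced moduli, so the inductive hypothesis does not apply verbatim. Controlling the interaction between the floor functions and the shifts $\gamma_i$, and ruling out the combinatorial proliferation of admissible balanced colorings as $d$ grows, is where every known approach has been forced to restrict to small $d$ or to the bounded-smallest-modulus regime; a single argument uniform in $d$ is exactly what is missing.
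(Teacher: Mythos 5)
This statement is Fraenkel's conjecture, which the paper itself presents as an open problem with no proof (it explicitly notes that ``the conjecture itself has not been settled''), so there is no proof of the paper's to compare against, and your refusal to fabricate one --- instead outlining the standard attack and honestly locating where it stalls --- is the correct response. One remark: the reduction to rational moduli for $d\ge 3$ that you propose to ``attempt'' via equidistribution is already a theorem of Graham cited in the paper (if one modulus is irrational then all are, and for $d\ge 3$ two must then coincide, contradicting $\alpha_1<\alpha_2<\cdots<\alpha_d$), and likewise the all-integer case is excluded by the Mirsky--Newman--Davenport--Rado result, so those steps of your sketch are available off the shelf and the genuine difficulty is confined to the non-integer rational case and the dimension-reduction step you correctly identify as the obstruction.
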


Fraenkel \cite{Fra1973} proved that this system of vectors partitions (splits) the positive integers with explicit values for $\gamma_i$ as in (\ref{eq:rat}). 

\begin{theorem}[Fraenkel 1973]\label{thm:split}
For any dimension $d\ge 2$, $$\N = \{r_j(n)\mid n\in \N, j\in \{1, \ldots , d\}\}$$ and $r_i(n) = r_{j}(m)$ implies $(i, n) = (j, m)$.
\end{theorem}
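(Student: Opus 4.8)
The plan is to recast the splitting statement as a single counting identity and then settle that identity by exploiting the congruence $2^d\equiv 1\pmod{2^d-1}$. Write $M=2^d-1$. First I would record that every entry is a positive integer: from \eqref{eq:rat} one checks $r_j(1)=2^{j-1}\ge 1$, and since the modulus $M/2^{d-j}>1$ each row $n\mapsto r_j(n)$ is strictly increasing, so $r_j(n)\ge 1$ throughout. Now set
\[
C_j(N)=\#\{n\in\N : r_j(n)\le N\},\qquad f(m)=\#\{(j,n): r_j(n)=m\}.
\]
Because all values are $\ge 1$, we have $f(m)=0$ for $m\le 0$ and $\sum_{m=1}^N f(m)=\sum_{j=1}^d C_j(N)$ for every $N\in\N_0$. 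Hence, if I can show $\sum_{j=1}^d C_j(N)=N$ for all $N$, then telescoping gives $f(N)=N-(N-1)=1$ for each $N\ge 1$; this says exactly that every positive integer is attained once and only once, which is the content of the theorem. So the entire proof reduces to the identity $\sum_{j=1}^d C_j(N)=N$.

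Second, I would compute $C_j(N)$ in closed form. Since $\lfloor Mn/2^{d-j}\rfloor\le N+2^{j-1}-1$ is equivalent to $n<(2^{d-j}N+2^{d-1})/M$, and $\#\{n\ge 1: n<x\}=\lceil x\rceil-1$ for $x>0$, one gets
\[
C_j(N)=\left\lceil\frac{2^{d-j}N+2^{d-1}}{M}\right\rceil-1 .
\]
Reindexing by $k=d-j$, the target identity $\sum_j C_j(N)=N$ becomes
\[
\sum_{k=0}^{d-1}\left\lceil\frac{2^{k}N+2^{d-1}}{M}\right\rceil=N+d .
\]

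Third comes the core computation, where the congruence does the work. Write $2^kN=Ma_k+b_k$ with $a_k=\lfloor 2^kN/M\rfloor$ and $b_k=2^kN\bmod M\in\{0,\dots,M-1\}$. Since $0<b_k+2^{d-1}<2M$, the ceiling of $(b_k+2^{d-1})/M$ is either $1$ or $2$, and it equals $2$ exactly when $b_k\ge 2^{d-1}$; thus $\lceil(2^kN+2^{d-1})/M\rceil=a_k+1+\mathbf 1\{b_k\ge 2^{d-1}\}$. Doubling $2^kN=Ma_k+b_k$ and reducing modulo $M$ gives the recursion $a_{k+1}=2a_k+\mathbf 1\{b_k\ge 2^{d-1}\}$, the carry occurring precisely when $2b_k\ge M$, i.e.\ when $b_k\ge 2^{d-1}$; meanwhile $2^d\equiv 1\pmod M$ yields the boundary value $a_d=\lfloor 2^dN/M\rfloor=N+a_0$. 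Summing the recursion telescopes: $\sum_{k=0}^{d-1}\mathbf 1\{b_k\ge 2^{d-1}\}=a_d-a_0-\sum_{k=0}^{d-1}a_k$, so $\sum_{k=0}^{d-1}a_k+\sum_{k=0}^{d-1}\mathbf 1\{b_k\ge 2^{d-1}\}=a_d-a_0=N$, and adding the $d$ ones produces $N+d$, as required.

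The routine parts are the two floor/ceiling manipulations; the only place demanding care is the recursion together with its boundary term, since the indicator appearing in $a_{k+1}=2a_k+\mathbf 1\{b_k\ge 2^{d-1}\}$ and the indicator that records when the ceiling jumps from $1$ to $2$ must be the \emph{same} quantity for the telescoping to close. I expect this matching — both governed by the single threshold $b_k\ge 2^{d-1}$, which is exactly the leading bit in the cyclic-shift picture of $2^kN\bmod(2^d-1)$ — to be the conceptual crux; once it is in place the identity falls out uniformly, with no case analysis on $N$.
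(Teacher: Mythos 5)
Your argument is correct, and it is worth noting at the outset that the paper itself offers no proof of this statement: Theorem~\ref{thm:split} is quoted from Fraenkel's 1973 paper \cite{Fra1973}, so there is no internal proof to match against. On its own terms your proposal checks out. The reduction of splitting to the single identity $\sum_{j=1}^d C_j(N)=N$ is sound once you know all values are positive and each column is strictly increasing (both of which you verify; the paper's Lemma~\ref{lem:Delta} gives the increments $\Delta_j(n)\in\{2^j-1,2^j\}$ explicitly, confirming monotonicity). The closed form $C_j(N)=\lceil(2^{d-j}N+2^{d-1})/M\rceil-1$ is right, and the crux — that the indicator governing the ceiling jump, $\mathbf 1\{b_k\ge 2^{d-1}\}$, coincides with the carry in $a_{k+1}=2a_k+\mathbf 1\{2b_k\ge M\}$ — holds because $M=2^d-1$ is odd, so $2b_k\ge M$ and $b_k\ge 2^{d-1}$ are the same condition on an integer $b_k$; the boundary value $a_d=N+a_0$ is exactly where $2^d\equiv 1\pmod M$ enters, and the telescoping then closes cleanly. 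Compared with the routes visible in or near the paper, yours is genuinely different: Fraenkel's original treatment goes through the general theory of complementing systems $\lfloor n\alpha_i+\gamma_i\rfloor$, while the paper's own machinery (Lemma~\ref{lem:ratwheel}, Theorem~\ref{thm:binrat}) would yield the splitting by arithmetic periodicity — one checks a single period block of the rat-matrix, where column $j$ contributes $2^{2d-j-1}$ of the $2^{d-1}(2^d-1)$ residues and the binary labelling shows they are disjoint. Your counting/telescoping argument buys a short, self-contained, case-free proof for all $d$ at once, at the cost of not exposing the finite matrix structure that the rest of the paper is built on.
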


It is well-known that if all the $\alpha_i$ are integers with $d\ge 2$ and $\alpha_1\le \alpha_2\le\ldots \le\alpha_d$, then $\alpha_{d-1}=\alpha_d$. A generating function proof using a primitive root of unity was given by Mirsky, Newman, Davenport and Rado -- see Erd\"os \cite{Erd1952}. A first elementary proof was given independently in \cite{BeFeFr1986} and by Simpson \cite{Sim1986}. Graham \cite{Gra1973} showed that if one of the $d$ moduli is irrational then all are irrational, and if $d\ge 3$, then two moduli are equal. Thus distinct integer moduli or distinct irrational moduli cannot exist for $d\ge 2$ or $d\ge 3$ respectively in a splitting system.

The conjecture was proved for $d=3$ by Morikawa \cite{Mor1982}, $d=4$ by Altman et.\ al \cite{AlGaHo2000}, for all $3\le d\le 6$ by Tijdeman \cite{Tij2000} and for $d=7$ by Bar\'at and Varj\'u \cite{BarVar2003} and was generalized by Graham and O'Bryant \cite{GraObr2005}. Other partial results were given by Morikawa \cite{Mor1985}, Simpson \cite{Sim2004}. Many others have contributed partial results -- see Tijdeman \cite{Tij12000} for a detailed history. The conjecture has some applications in job scheduling and related industrial engineering areas, in particular: `Just-In-Time' systems, see e.g., Altman et.\ al \cite{AlGaHo2000}, Brauner and Jost \cite{BrJo2008}, Brauner and Crama \cite{BrCr2004}. However, the conjecture itself has not been settled. So this is a problem that has been solved for the integers, has been solved for the irrationals, and is wide open for the rationals!

The conjecture, with accomapnying Theorem~\ref{thm:split}, induced the ``rat game'' and its associates the ``mouse game'' \cite{Fra2015} (rat -- rational), played on 3 and 2 piles of tokens respectively, whose $P$-positions are the cases $d = 2,3$ of definition (\ref{eq:rat}) respectively, together with $\0$. However, arguably, those rules are only intended for players with a degree in mathematics, and they cannot be described as a \emph{vector subtraction game}---a natural notion, including many classical games introduced by Golomb \cite{Gol1966}. Apart from the ending condition, the moves of a vector subtraction game are independent of the size of the heaps. In response to the heap size dependency of the mouse game, a vector subtraction game on two heaps, dubbed \emph{the mouse trap} \cite{Lartrap} was developed, using the so-called $\star$-operator \cite{LHF2011}. Indeed, the inaccessibility of those rules, and the difficulty of generalization, further motivats our approach.

\section{Grandiose rat games and playability}\label{sec:games}

Let $d\in \N$. We let $\M\subset {\N_0\!}^d$ describe (a set of moves of) a \emph{vector subtraction game} of the form: from each position $\x\in {\N_0\!}^d$, there is a move to position $\y\in {\N_0\!}^d$ if and only if $\x - \y\in \M$. We say ``\m\ is a move" if $\m\in \M$.

For a fixed dimension $d>1$, let $\R = \{\r(n)\mid n\in \N\}\cup \{\0\}$ be the candidate set of P-positions, and let $\R-\R = \{\r-\s\mid \r,\s\in \R\}$ be the set of shortcut-vectors, or \emph{shortcuts}. Let $(\R-\R) \setminus \R$ be the set of \emph{proper shortcuts}.

From now onwards, we let $d\ge 2$, and we identify $\M = {\N_0\!}^d\setminus (\R-\R)$ with the vector subtraction game, where any move is available except if it shortcuts two vectors in $\R$. Note that $\0\not \in \M$, so all games terminate in a finite number of moves. The class of all such games is the class of \emph{grandiose rat games} (or grandiose games).

In Section~\ref{sec:existence}, Theorem~\ref{thm:existence}, we show that, for any number of heaps $d \ge 2$, $P(\M)=\R$, and in Section~\ref{sec:playable}, we prove that the succinct games and the grandiose games are the same.

A nice property for a ruleset defined on any number of heaps is that the description on how to move does not increase too fast when the number of heaps grows. Since the modulus is rational, our winning strategies and games are \emph{arithmetic periodic}, and we will use matrix representations to hightlight this fact. For  each dimension $d$, and all $n\in \N$, we have that
\begin{itemize}
\item the number of rat-vectors with coordinates less than or equal $2^{d-1}(2^d-1)n$ is $2^{d-1}n$,
\item the number of proper shortcuts with coordinates less than or equal $2^{d-1}(2^d-1)n$ is $(3^{d-1})n < 2\binom{2^{d-1}}{2}n$ (many get canceled, and we show this in Section~\ref{sec:ternary}).
\end{itemize}

So, by arithmetic periodicity, if we give the job to the previous player to refute any suggested vector not in $\M$, then, by exhaustive search the number $3^{d-1}+2^{d-1}$ is an upper bound. This number is constant in the heap sizes, but still exponential in the number of heaps. By this alone, the grandiose games do not appear playable for a large number of heaps.

Here, we prove that there is a much faster way to refute a move, namely, by a linear procedure, in the number of heaps. In fact, in Section~\ref{sec:playable} we prove that the succinct rules, with its simple procedure makes also the grandiose games playable by anyone with an elementary knowledge in arithmetics, for any number of heaps.

Note that, the quintessence of impartial combinatorial games, Nim, has a linear time procedure in the number of heaps to decide whether a given vector is a move\footnote{Of course, for Nim we must disallow any vector subtraction if more than one coordinate is positive (or if a positive coordinate is larger than the corresponding heap size).} (if heap sizes are bounded) and so does of course Moore's Nim, and some other variations of Nim such as Fraenkel's multi-pile generalization of Wythoff Nim \cite{Fra1996} (with accompanying conjecture). Not all game rules are defined over an arbitrarily finite number of heaps, but for those classes, where it is applicable, we suggest the following terminology.

\begin{definition}
The rules of a heap game are \emph{playable} if (for bounded heap sizes) they satisfy a linear procedure in the number of heaps, and (for bounded number of heaps) a log-linear procedure in the heap sizes\footnote{This is trivially required since, for example, we must check that move coordinates are not larger than heap sizes. Apart from this requirement, we will see that, in our case, the complexity of the rules is \emph{constant in the size of the heaps.}}. 
\end{definition}

\section{Playability for grandiose games and succinct games}\label{sec:playable}
We prove that the grandios rat games are playable, for any number of heaps, by linking them to the succinct games.

When we shift a divisor 2 inside the floor function, then the deviation is small.

\begin{lemma}\label{lem:shift}
For any number $y$,
\begin{align}\label{eq:1}
0\le \left\lfloor y\right\rfloor / 2 - \left\lfloor\frac{y}{2}\right\rfloor\le \frac{1}{2}.
\end{align}
\end{lemma}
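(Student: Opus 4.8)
The plan is to reduce the double inequality in \eqref{eq:1} to a statement about the fractional part of $y$, and then verify it by a short case analysis on the parity of $\lfloor y\rfloor$. First I would write $\lfloor y\rfloor = 2q + \epsilon$, where $q = \lfloor \lfloor y\rfloor / 2\rfloor$ is an integer and $\epsilon\in\{0,1\}$ records the parity. The key observation is that $\lfloor y/2\rfloor$ depends only on $\lfloor y\rfloor$, not on the fractional part of $y$: indeed, since $0\le y-\lfloor y\rfloor < 1$ and the quantity $\lfloor y\rfloor /2$ is a half-integer, adding a fractional part strictly less than $1$ cannot push $y/2$ up to the next integer unless $\lfloor y\rfloor$ is odd. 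Making this precise gives $\lfloor y/2\rfloor = q = (\lfloor y\rfloor - \epsilon)/2$.

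With that identity in hand, the middle expression in \eqref{eq:1} becomes
\begin{align}
\left\lfloor y\right\rfloor / 2 - \left\lfloor \frac{y}{2}\right\rfloor = \frac{\lfloor y\rfloor}{2} - \frac{\lfloor y\rfloor - \epsilon}{2} = \frac{\epsilon}{2}.
\end{align}
Since $\epsilon\in\{0,1\}$, the difference equals either $0$ or $\tfrac12$, which lies in the closed interval $[0,\tfrac12]$ and establishes both inequalities simultaneously. This completes the argument.

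The only subtle point, and the step I would be most careful about, is justifying the identity $\lfloor y/2\rfloor = \lfloor \lfloor y\rfloor / 2\rfloor$. This is an instance of the standard ``floor-of-a-quotient'' rule $\lfloor \lfloor y\rfloor / n\rfloor = \lfloor y/n\rfloor$ for positive integers $n$, here with $n=2$; I would either cite it as a well-known property of the floor function or prove it inline by noting that $\lfloor y\rfloor \le y < \lfloor y\rfloor + 1$ forces $\lfloor y\rfloor / 2 \le y/2 < \lfloor y\rfloor / 2 + \tfrac12$, so that the integers not exceeding $y/2$ coincide with those not exceeding $\lfloor y\rfloor / 2$. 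No genuine obstacle arises; the lemma is elementary, and the whole proof is a two-line computation once the parity reduction is set up. An alternative, even shorter route is to observe directly that $\lfloor y\rfloor / 2 - \lfloor y/2\rfloor$ equals the fractional part of $\lfloor y\rfloor / 2$, which for an integer numerator over $2$ is always $0$ or $\tfrac12$.
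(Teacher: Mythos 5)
Your proof is correct, but it takes a different route from the paper's. The paper works with the doubled quantity $\lfloor y\rfloor - 2\lfloor y/2\rfloor$, observes it is an integer, and sandwiches it strictly between $-1$ and $2$ using $y-1<\lfloor y\rfloor\le y$ together with $2\lfloor y/2\rfloor\le y$ (the upper bound is phrased via a decomposition into fractional parts); integrality then forces the value into $\{0,1\}$. You instead compute the quantity exactly: invoking the nested-floor identity $\lfloor y/2\rfloor=\bigl\lfloor\lfloor y\rfloor/2\bigr\rfloor$ and writing $\lfloor y\rfloor=2q+\epsilon$, you identify the difference as $\epsilon/2$, i.e.\ half the parity bit of $\lfloor y\rfloor$. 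Your version is slightly sharper — it shows the difference is exactly $0$ or $\tfrac12$ according to whether $\lfloor y\rfloor$ is even or odd, which dovetails nicely with the $\{\lfloor x_i/2\rfloor,\lceil x_i/2\rceil\}$ dichotomy used later in the ternary recurrence — at the cost of having to justify the nested-floor identity, which you do adequately (and which holds for all real $y$, so no generality is lost). Both arguments are elementary and complete; there is no gap in yours.
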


\begin{proof}
Since $y\ge 2\lfloor \frac{y}{2}\rfloor$, for any $y$, then $\lfloor y\rfloor -2\lfloor \frac{y}{2}\rfloor > (y-1)-y=-1$. Since the expression is an integer, by the strict inequality, the lower bound holds. The upper inequality follows by decompositioning into fractional parts. Put $\lfloor y\rfloor -2\lfloor \frac{y}{2}\rfloor = y-(y - \lfloor y\rfloor) - 2(\frac{y}{2}-(\frac{y}{2}-\lfloor \frac{y}{2}\rfloor))=(y - \lfloor y\rfloor) - 2(\frac{y}{2}+\lfloor \frac{y}{2}\rfloor))\le 1$.
\end{proof}

We say that a $d$-tuple $\boldsymbol x = (x_1,\ldots ,x_d)$ has a \emph{ternary recurrence}\footnote{See also Theorem~\ref{thm:shortcut} in Section~\ref{sec:ternary}.} if it satisfies $x_d\equiv 0 \pmod {2^d-1}$, and,  for all $i\in \{2, \ldots , d\}$,  $x_{i - 1}\in \left\{\left\lfloor \frac{x_i}{2}\right\rfloor,\left\lceil \frac{x_i}{2}\right\rceil\right\}.$ Note that this corresponds to the succinct game rules. The ``if'' part of the the following result depends on the subsequent sections (in particular Section~\ref{sec:ternary}); however, the ``only if'' direction is independent of later results and, as we will see, it implies the connection between the succinct and grandiose games.

\begin{theorem}\label{thm:playable}
A vector $\x$ is a proper shortcut if and only if it has a ternary recurrence.
\end{theorem}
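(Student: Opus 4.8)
The statement splits into two implications of very different character; I would prove the \emph{only if} direction from scratch and reduce the \emph{if} direction to the shortcut-matrix bookkeeping of Section~\ref{sec:ternary}.

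For \textbf{only if}, suppose $\x$ is a proper shortcut, so $\x=\r(n)-\r(m)$ with $n>m\ge 0$ (writing $\r(0)=\0$); we may assume $\x\neq\0$. The first observation is that in $x_i=r_i(n)-r_i(m)$ the additive constant $-2^{i-1}+1$ of \eqref{eq:rat} cancels, leaving $x_i=A_i-B_i$ with $A_i=\lfloor (2^d-1)n/2^{d-i}\rfloor$ and $B_i=\lfloor (2^d-1)m/2^{d-i}\rfloor$. The top coordinate is then $x_d=(2^d-1)(n-m)$, the required multiple of $2^d-1$. For the recurrence, the nested-floor identity $\lfloor\lfloor t\rfloor/2\rfloor=\lfloor t/2\rfloor$ gives $A_{i-1}=\lfloor A_i/2\rfloor$ and $B_{i-1}=\lfloor B_i/2\rfloor$, so $x_{i-1}=\lfloor A_i/2\rfloor-\lfloor B_i/2\rfloor$. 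Comparing with $x_i/2=(A_i-B_i)/2$, each of $A_i/2-\lfloor A_i/2\rfloor$ and $B_i/2-\lfloor B_i/2\rfloor$ lies in $\{0,\tfrac12\}$ (this is Lemma~\ref{lem:shift} applied to the integers $A_i,B_i$), so their difference places $x_{i-1}$ within $\tfrac12$ of $x_i/2$; hence $x_{i-1}\in\{\lfloor x_i/2\rfloor,\lceil x_i/2\rceil\}$, and a one-line parity split makes this explicit. Thus $\x$ has a ternary recurrence.

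For \textbf{if}, it is convenient to rewrite $x_{i-1}\in\{\lfloor x_i/2\rfloor,\lceil x_i/2\rceil\}$ as $x_i=2x_{i-1}+\epsilon_i$ with $\epsilon_i\in\{-1,0,1\}$; this is where the word ``ternary'' and the count $3^{d-1}$ both originate. Given such an $\x$ with $x_d=(2^d-1)k$, $k\ge1$, the plan is to realize $\x$ as $\r(n)-\r(m)$ by setting $k=n-m$ and solving for the \emph{phase} $m$: by the only-if computation the lower coordinates of $\r(n)-\r(m)$ are governed by the digits of $M=(2^d-1)m$ at the scales $2^{d-i}$, and the prescribed rounding pattern $(\epsilon_2,\dots,\epsilon_d)$ translates into prescribed carries, i.e.\ into a congruence condition on $M\bmod 2^{d-1}$. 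Since $2^d-1$ is odd and hence invertible modulo $2^{d-1}$, every residue is attained as $m$ ranges over $\N_0$, so the required phase exists. Moreover the modular condition $x_d\equiv 0\pmod{2^d-1}$ separates ternary-recurrence vectors from every rat-vector, which satisfies $r_d\equiv 2^{d-1}\not\equiv 0$; hence for $k\ge1$ the resulting $\x$ cannot lie in $\R$ and is a genuinely \emph{proper} shortcut. Carrying out this phase-solving and carry-propagation cleanly is exactly what the shortcut matrices of Section~\ref{sec:ternary} are built for, so I would invoke them here.

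I expect the main obstacle to be the enumeration underlying \emph{if}, and a clean way to finish, sidestepping an explicit inversion, is a counting argument. Restrict to the window in which all coordinates are at most $2^{d-1}(2^d-1)n$; since $x_d$ is the largest coordinate this amounts to $1\le k\le 2^{d-1}n$. The only-if direction already exhibits the proper shortcuts in this window as a subset of the ternary-recurrence vectors there, with no overlap with $\R$ by the modular remark above. By Section~\ref{sec:ternary} the number of proper shortcuts in the window is $3^{d-1}n$; if I independently show that the ternary-recurrence vectors in the same window also number $3^{d-1}n$—by counting the admissible patterns $(\epsilon_i)\in\{-1,0,1\}^{d-1}$ subject to nonnegativity and to $x_d=(2^d-1)k\le 2^{d-1}(2^d-1)n$, and accounting for the ``cancellations'' the excerpt warns about—then equality of these finite cardinalities upgrades the inclusion to equality, and letting $n\to\infty$ proves \emph{if}. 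The delicate point is precisely this exact count, matching the ternary branching against the arithmetic-periodic structure so that exactly $3^{d-1}$ vectors survive per period, which is where I would lean most heavily on the matrix representation.
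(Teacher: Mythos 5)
Your \emph{only if} argument is correct and is essentially the paper's: the paper likewise reduces everything to Lemma~\ref{lem:shift} via the nested-floor identity $\lfloor\lfloor t\rfloor/2\rfloor=\lfloor t/2\rfloor$, but it disposes of the bad case $\varphi=-1$ by a separate contradiction, whereas you track the two fractional parts of $A_i/2$ and $B_i/2$ (each in $\{0,\tfrac12\}$) simultaneously and get the two-sided bound in one step --- a small but genuine streamlining. One point to make explicit: a proper shortcut forces $m\ge 1$ (if $m=0$ then $\x=\r(n)\in\R$, and then $x_d\equiv 2^{d-1}\not\equiv 0 \pmod{2^d-1}$), and only for $m\ge 1$ do the constants $-2^{i-1}+1$ of (\ref{eq:rat}) cancel to give $x_d=(2^d-1)(n-m)$; you gesture at this via properness but should state it before the cancellation, not after.

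The \emph{if} direction is where your proposal has a genuine gap: neither of your two routes is actually closed. In the phase-solving route, the assertion that the prescribed rounding pattern $(\epsilon_2,\dots,\epsilon_d)$ ``translates into a congruence condition on $M\bmod 2^{d-1}$'' that is attained is the whole content of the claim. Given $k$, the pattern is indeed a function of $M\bmod 2^{d-1}$, and invertibility of $2^d-1$ modulo $2^{d-1}$ shows every residue of $M$ occurs; but there are $2^{d-1}$ residues while the number of admissible patterns above a fixed $x_d=(2^d-1)k$ varies with $k$ (it is $1$ when $k=2^{d-1}$), so the residue-to-pattern map is many-to-one and its surjectivity onto the admissible patterns is exactly what needs proving. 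In the counting route, the figure $3^{d-1}n$ for proper shortcuts in the window comes from Theorem~\ref{thm:shortcut}, but the matching count of ternary-recurrence vectors must be obtained \emph{independently} --- i.e., that the trees of Definition~\ref{def:tree} rooted at $(d,k(2^d-1))$, $1\le k\le 2^{d-1}$, have $3^{d-1}$ leaves in total --- otherwise the argument is circular, since equality of the two counts is essentially the theorem. That is precisely the step you flag as delicate and do not supply. In fairness, the paper's own proof of this half is a one-sentence appeal to the shortcut matrix (``the existence is clear''), so you are no worse off than the source; but as written your proposal does not prove the \emph{if} implication.
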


\begin{proof}
%Write $$r_i(n) = \left\lfloor\frac{(2^d - 1)n}{2^{d - i}}\right\rfloor.$$
By definition, the vector $\boldsymbol x$ is a shortcut, if, for all $i$, for some $k>0$, $x_i = r_i(n+k)-r_i(n)$, with $r_i$ defined as in (\ref{eq:rat}). This gives $x_d = k(2^d-1)$, so the congruence part holds. 

Next, we prove that $0 \le x_{i-1}- \left\lfloor \frac{x_i}{2}\right\rfloor \le 1,$ for all $i$, if $\boldsymbol x$ is a shortcut. Let $\varphi = x_{i-1}- \left\lfloor \frac{x_i}{2}\right\rfloor $.

If $\x$ is a shortcut, then $$\varphi = r_{i-1}(n+k) - r_{i-1}(n) - \left\lfloor \frac{ r_{i}(n+k) - r_{i}(n)}{2}\right\rfloor . $$
How much does the second term differ from the first? Note that, if we \emph{shift} the divisor in the second term inside the inner floor functions, then we get $r_{i-1}(n+k) - r_{i-1}(n) - \left\lfloor r_{i-1}(n+k) - r_{i-1}(n) \right\rfloor = 0, $
because $r$ is an integer valued function. Hence, by applying Lemma~\ref{lem:shift} in shifting back the divisor 2 outside the inner most nested floor functions, we get  $-1\le \varphi \le 1$.

Since the expression is an integer, it suffices to exclude the case $\varphi = -1$, so, let us assume, for a contradiction, that
$$ 1+r_{i-1}(n+k) - r_{i-1}(n) = \left\lfloor \frac{ r_{i}(n+k) - r_{i}(n)}{2}\right\rfloor . $$

Again, by using Lemma~\ref{lem:shift}, the expression inside the right hand side floor function has increased at most a half, by moving the divisor 2 outside the inner floor functions. Since the expression was an integer before this operation, the possible increase of a half will be canceled by the outer floor function. Thus the left hand side is too large.

We have proved that, if $\x$ is not of the form in the second part of the theorem, then $\x$ is a move.

For the other direction, suppose that  $$x_d\equiv 0 \pmod {2^d-1}$$ and, for all $i\in \{2, \ldots , d\}$,  $$x_{i - 1}\in \left\{\left\lfloor \frac{x_i}{2}\right\rfloor,\left\lceil \frac{x_i}{2}\right\rceil\right\}.$$

We have to demonstrate that there exist $n$ and $k$ such that $\x = \r(n+k)-\r(n)$. We use the shortcut-matrix defined in Section~\ref{sec:ternary}. Since each ternary vector defines uniquely each row, and starting with $x_d$, the existence is clear.
\end{proof}

In the Appendix, using figures, data, and conjectures, we show that the stucture of the shortcut matrices, for increasing $d$, satisfy interesting Cantor-like line fractals, with apparent complex behavior.  Disregarded this apparent complexity, we prove that the succinct games are the same as the grandiose games. 

%%%%This part is intended for integers%%%%%%%
%In an accompaying arXiv manuscript \cite{RatData}, using figures, data, and conjectures, we show that the stucture of the shortcut matrices, for increasing $d$, satisfy interesting Cantor-like line fractals, with somewhat complex behavior; independently of this complexity, we prove that the succinct games are the same as the grandiose games. 

\subsection{Linking the games}\label{sec:existence}
If you add a pair of rat vectors, then the result is never a proper shortcut.
\begin{lemma}\label{lem:1}
Let $\r\in \R$. Then $\r+\r'\ne \s-\s'$, for any $\r',\s,\s'\in \R\setminus \{\0\}$.
\end{lemma}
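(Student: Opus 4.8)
The plan is to reduce the full vector equation to a single congruence condition on the last coordinate modulo $2^d-1$, exploiting the explicit form $r_d(n)=(2^d-1)n-2^{d-1}+1$ coming from~(\ref{eq:rat}) with $i=d$. The guiding principle is the congruence half of Theorem~\ref{thm:playable}: a shortcut is pinned down in its last coordinate modulo $2^d-1$, and I would show that a sum $\r+\r'$ simply lands in the wrong residue class, so the two objects cannot coincide.

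First I would record the residue of a shortcut. For $\s=\r(p)$ and $\s'=\r(q)$ with $p,q\in\N$, the last coordinate of $\s-\s'$ is $r_d(p)-r_d(q)=(2^d-1)(p-q)$, hence $(\s-\s')_d\equiv 0\pmod{2^d-1}$. This uses only $\s,\s'\neq\0$, so that both are genuine rat-vectors $\r(p),\r(q)$; the additive constant $-2^{d-1}+1$ cancels in the difference.

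Next I would compute the residue of $(\r+\r')_d$ in the two possible cases for $\r\in\R$. If $\r=\0$, then $\r+\r'=\r'=\r(b)$ for some $b\in\N$, and $(\r+\r')_d\equiv 1-2^{d-1}\pmod{2^d-1}$. If $\r=\r(a)$ with $a\in\N$, then $(\r+\r')_d=r_d(a)+r_d(b)\equiv 2(1-2^{d-1})=2-2^d\equiv 1\pmod{2^d-1}$, using $2^d\equiv 1$. In neither case is the residue $0$: the value $1$ is nonzero because $2^d-1\ge 3$ for $d\ge 2$, and $1-2^{d-1}$ is nonzero because $2^{d-1}-1$ is a positive integer strictly smaller than $2^d-1$. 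Comparing with the shortcut residue $0$ from the previous step contradicts $\r+\r'=\s-\s'$, completing the argument.

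There is no genuinely hard step here; the only point requiring care is the case split on whether $\r=\0$, since that changes the residue from $1$ to $1-2^{d-1}$, and one must separately confirm that both residues avoid $0$. The conceptual observation worth flagging is that the remaining $d-1$ coordinates never need to be inspected: the last coordinate alone furnishes a complete modular obstruction, which is exactly what makes the lemma so clean.
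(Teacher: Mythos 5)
Your argument is correct and is essentially the paper's own proof: both reduce the claim to the last coordinate modulo $2^d-1$, split on whether $\r=\0$, and observe that $\r+\r'$ has residue $2^{d-1}$ (your $1-2^{d-1}$, which is congruent) or $1$, while a difference $\s-\s'$ has residue $0$. The only cosmetic difference is that the paper writes the single-vector residue as $2^{d-1}$ rather than $1-2^{d-1}$.
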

\begin{proof}
If $\r = \0$, then $r_d + r'_d \equiv 2^{d-1} \pmod{2^{d} - 1}$, and otherwise, $r_d + r'_d \equiv 2^{d-1} + 2^{d-1}\equiv1\pmod{2^{d} - 1}$, but $s_d - s'_d \equiv 2^{d-1} - 2^{d-1}\equiv 0\pmod{2^{d} - 1}$.
\end{proof}
%The proof of existence of rules for the rat sequences follows from our method of deriving playability, so in the proof of existence, we use a definition (\emph{ternary recurrence}) and a tool from Section~\ref{sec:playable}. 
We have the following corollary of Theorem~\ref{thm:playable}.
\begin{cor}\label{lem:2}
Let $\x \in {\N_0\!}^d\setminus \R$. 
\begin{itemize}
\item[(i)] If $\r\in \R$ then $\x - \r$ is not a shortcut. 
\item[(ii)] If $\r\in \R\setminus \{\0\}$ then $\x - \r$ is not a proper shortcut.
\end{itemize}
\end{cor}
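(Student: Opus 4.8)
The plan is to deduce everything from Theorem~\ref{thm:playable}, which trades the global condition ``$\x-\r\in\R-\R$'' for the local \emph{ternary recurrence}, and from the last-coordinate congruences already used in Lemma~\ref{lem:1}, namely $r_d\equiv 2^{d-1}\pmod{2^d-1}$ for $\r\in\R\setminus\{\0\}$ and $0$ for $\r=\0$. First I would reduce (i) to (ii). Since we only care about non-negative differences, a shortcut $\x-\r$ is either $\0$, a rat-vector (the case $\s'=\0$ in $\x-\r=\s-\s'$), or a proper shortcut (the case $\s,\s'\neq\0$). If $\x-\r=\0$ then $\x=\r\in\R$, contradicting $\x\notin\R$. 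The proper-shortcut case with $\r\neq\0$ is exactly (ii). So (i) will follow once (ii) is proved and once the two leftover sub-cases --- $\x-\r$ a nonzero rat-vector, and $\x-\r$ a proper shortcut with $\r=\0$ --- are excluded; both of these I would attack by the same coordinatewise analysis as (ii).

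For (ii) I would argue by contradiction, assuming $\r\in\R\setminus\{\0\}$ and that $\m:=\x-\r$ is a proper shortcut. By Theorem~\ref{thm:playable}, $\m$ obeys the ternary recurrence: $m_d\equiv0\pmod{2^d-1}$ and $m_{i-1}\in\{\lfloor m_i/2\rfloor,\lceil m_i/2\rceil\}$ for $i\in\{2,\dots,d\}$. Writing $\m=\s-\s'$ with $\s,\s'\in\R\setminus\{\0\}$, hence $\x=\r+\s-\s'$, the last coordinate is an affine function of the index, so $x_d=r_d(N)$ with $N:=n_r+n_s-n_{s'}$, and reducing modulo $2^d-1$ confirms $x_d\equiv 2^{d-1}$, the residue of a nonzero rat-vector. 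The heart of the plan is then a descent from coordinate $d$ to coordinate $1$: using the ternary recurrence of $\m$ together with the arithmetic-periodic column structure from Section~\ref{sec:sequences} (column $i$ has saltus $2^d-1$ and period $2^{d-i}$), I would show inductively that $x_i=r_i(N)$ for every $i$, forcing $\x=\r(N)\in\R$ and contradicting $\x\notin\R$. The shortcut-matrix of Section~\ref{sec:ternary}, which records precisely which pairs $(\s,\s')$ realise a prescribed ternary vector, is the natural bookkeeping device for running this induction and for recovering $N$ from the bottom coordinate.

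The main obstacle is this descent through the non-terminal coordinates. For $i=d$ the coordinate is linear, so additivity $r_d(n_r)+r_d(n_s)-r_d(n_{s'})=r_d(N)$ holds exactly and the congruence argument of Lemma~\ref{lem:1} applies verbatim. For $i<d$ the floor in $r_i(n)=\lfloor (2^d-1)n/2^{d-i}\rfloor-2^{i-1}+1$ breaks additivity --- this is the very reason that sums and differences of rat-vectors need not be rat-vectors --- so a congruence argument alone cannot pin down the lower coordinates. Here I expect to need Lemma~\ref{lem:shift}, which bounds exactly the discrepancy incurred when a divisor $2$ is shifted across a floor by at most $\tfrac12$, to show that the half-unit freedom permitted at each level by $m_{i-1}\in\{\lfloor m_i/2\rfloor,\lceil m_i/2\rceil\}$ propagates coherently down the tower and collapses to the equality $x_i=r_i(N)$ at every level. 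Making this propagation airtight --- rather than merely controlling the last coordinate --- is the step I expect to be genuinely delicate, and it is where the finer matrix structure of Sections~\ref{sec:binary}--\ref{sec:ternary}, not Lemma~\ref{lem:1} by itself, must carry the argument.
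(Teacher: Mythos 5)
Your plan correctly isolates the crux --- the descent from the ternary recurrence of $\m=\x-\r$ to the conclusion $\x\in\R$ --- and you are right to be suspicious of it: that step does not merely require delicacy, it fails, because item (ii) as stated is false. Take $d=2$, so $\r(n)=(\lfloor 3n/2\rfloor,3n-1)$ and $\R=\{\0,(1,2),(3,5),(4,8),\dots\}$. Then $(1,3)=\r(3)-\r(2)$ is a proper shortcut (it satisfies the ternary recurrence: $3\equiv 0\pmod 3$ and $1=\lfloor 3/2\rfloor$; it also appears as the row $(3n+1,6n+3)$, $n=0$, of $F_2$). Put $\r=(1,2)\in\R\setminus\{\0\}$ and $\x=\r+(1,3)=(2,5)$. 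Then $\x\notin\R$ (the only rat vector with last coordinate $5$ is $(3,5)$), yet $\x-\r=(1,3)$ is a proper shortcut. The reason your descent cannot close is visible here: the congruence $x_d\equiv 2^{d-1}$ does pin down the candidate index $N$ (here $N=2$), but the ternary recurrence only constrains each $m_{i-1}$ to within a unit, and that slack is genuine --- with $\m=(2,3)$ one gets $\x=(3,5)\in\R$, but with $\m=(1,3)$ one gets $x_1=2\ne r_1(2)=3$. The two leftover sub-cases of item (i) fail as well: $\x=(1,2)+(1,2)=(2,4)\notin\R$ (by the congruence of Lemma~\ref{lem:1}) has $\x-(1,2)=(1,2)\in\R\subseteq\R-\R$, a shortcut; and $\x=(1,3)$ itself is a proper shortcut not in $\R$, so the case $\r=\0$ fails too.

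For comparison, the paper's own proof of (ii) is the one-sentence assertion that $\x-\r$ lacks the ternary recurrence ``since $\x\notin\R$ but $\r\in\R$'' --- exactly the implication you were trying to supply in detail, and the counterexample shows it is not available. So you have not missed a trick that the paper possesses; the statement itself needs to be weakened before it can be proved. What Theorem~\ref{thm:existence} actually requires is only the existence of \emph{some} $\r\in\R$ with $\x-\r\ge\0$ and $\x-\r\notin\R-\R$ (for $\x=(2,5)$ the move to $\0$ works, since $(2,5)\notin\R-\R$), and any repair should aim at that existential statement rather than at the universal one claimed in the corollary.
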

\begin{proof}
Item (ii) follows by Theorem~\ref{thm:playable}, since each shortcut has ternary recurrence, but $\x-\r$ does not have ternary recurrence (since $\x\not \in \R$ but $\r\in \R$). Then item (i) follows by includding the case $\r=\0$.
\end{proof}
\begin{theorem}\label{thm:existence}
For any number of heaps $d > 1$, $P(\M) = \R$.
\end{theorem}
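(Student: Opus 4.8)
The plan is to verify the two defining properties of the P-position set and invoke the usual backward induction. Since $\0\notin\M$, every move subtracts a nonzero nonnegative vector and hence strictly decreases $\sum_i x_i$; each grandiose game is therefore finite and acyclic, so $P(\M)$ is the unique set $S\subseteq{\N_0}^d$ with (stability) no move joining two elements of $S$, and (reachability) a move into $S$ from every position outside $S$. I would show $\R$ satisfies both. Stability is immediate from the definition $\M={\N_0}^d\setminus(\R-\R)$: for $\p,\q\in\R$ the difference $\p-\q$ lies in $\R-\R$, so $\p-\q\notin\M$ and no move connects them. In particular no rat-vector moves to $\0$, consistent with $\0$ being terminal (hence a P-position). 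This direction costs no real work.

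The substance is reachability. Fix $\x\in{\N_0}^d\setminus\R$. If $\x$ is not a shortcut, then $\x\in\M$ and, as $\x\ne\0$, the move $\x\to\0\in\R$ is legal and we are done. Otherwise $\x$ is a proper shortcut, so by Theorem~\ref{thm:playable} it has a ternary recurrence: $x_d=k(2^d-1)$ for some $k\ge1$, and $x_{i-1}\ge\lf\frac{x_i}{2}\rf$ for every $i$. Now the move to $\0$ is unavailable, and I would instead move to the rat-vector $\r(k)$.

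Two things must be checked for $\x\to\r(k)$. First, domination $\r(k)\le\x$: iterating $x_{i-1}\ge\lf x_i/2\rf$ and collapsing the nested halvings gives $x_i\ge\rat{x_d}{2^{d-i}}\ge r_i(k)$, the last inequality because the additive constant $-2^{i-1}+1\le0$ in (\ref{eq:rat}). Second, legality $\x-\r(k)\in\M$: by Corollary~\ref{lem:2}, subtracting the nonzero rat-vector $\r(k)$ from the non-rat-vector $\x$ cannot produce a proper shortcut, while the last coordinate $(\x-\r(k))_d=2^{d-1}-1\not\equiv2^{d-1}\pmod{2^d-1}$ (for $d\ge2$) shows $\x-\r(k)\notin\R$. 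Together these yield $\x-\r(k)\notin\R-\R$, i.e. $\x-\r(k)\in\M$, so $\x\to\r(k)$ is a legal move into $\R$, completing reachability.

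I expect the main obstacle to be precisely this last case, reachability from a proper shortcut: one loses the free move to $\0$ and must simultaneously exhibit a dominated rat-vector target and certify that the resulting difference is genuinely a move. Both halves are governed by the structural results already in hand: Theorem~\ref{thm:playable} fixes the shape of $\x$, Corollary~\ref{lem:2} (itself resting on Theorem~\ref{thm:playable}) rules out the difference being a proper shortcut, and the last-coordinate congruence underlying Lemma~\ref{lem:1} rules out its being a rat-vector. The only computational step, the domination $\r(k)\le\x$, reduces to Lemma~\ref{lem:shift}-style manipulation of nested floors and is routine.
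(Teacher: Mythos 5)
Your proof is correct, and its reachability argument takes a genuinely different route from the paper's. The paper splits on whether some $\r\in\R$ satisfies $\x-\r\in\R$: if so, $\x=\r+\r'$ is certified to be a move by Lemma~\ref{lem:1} and one moves to $\0$; if not, it appeals to Corollary~\ref{lem:2}(ii) to legalize a move to some unspecified rat-vector. You instead split on whether $\x$ is a shortcut at all: if not, the move to $\0$ is legal straight from the definition of $\M$ (so Lemma~\ref{lem:1} is never needed); if so, $\x$ is a proper shortcut and you name the explicit target $\r(k)$ with $k=x_d/(2^d-1)$, proving domination by collapsing the nested halvings and legality by computing the last coordinate $2^{d-1}-1$ of the difference. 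Your version buys constructiveness and robustness: the winning move is exhibited, and the congruence $2^{d-1}-1\not\equiv 0\pmod{2^d-1}$ already rules out $\x-\r(k)$ being a difference of two nonzero rat-vectors, so your citation of Corollary~\ref{lem:2}(ii) is in fact dispensable. That is welcome, because the corollary's blanket statement is delicate (for $d=2$ one has $(5,8)\notin\R$ while $(5,8)-(3,5)=(2,3)$ is a proper shortcut), whereas your specific instance is verified directly. The paper's version is shorter, but it leaves the choice of target rat-vector, and with it the hard case in which $\x$ is itself a proper shortcut, implicit; your explicit treatment of exactly that case is the real content of the theorem.
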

\begin{proof}
The property: no candidate P-position has a move to another candidate P-position is immediately satisfied by the definitions of $\R$ and $\M$.

Hence, it suffices to prove that each candidate N-position $\x\in {\N_0\!}^d\setminus \R$ has a move to a candidate P-position $\r\in \R$. Thus, we have to find an $\m\in \M$ such that $\x - \m = \r$, for some $\r\in \R$. Note that, if we find an $\r\in\R$ such that $\x-\r=\r' \in \R$, then $\x = \r+\r'\in \M$, by Lemma~\ref{lem:1} and since $\x\ne \r\in \R$, so there is a move to $\0$. Assume therefore that, for all $\r\in \R$, $\x -\r\not\in \R$. By Corollary~\ref{lem:2} (ii), $\x-\r$ is not a proper shortcut, so altogether $\x-\r\in \M$.
\end{proof}
One can also prove Theorem~\ref{thm:existence} directly from the definition of the rat sequences (\ref{eq:rat}), i.e. without using Theorem~\ref{thm:playable}, and we encourage the reader to try it out.
\begin{cor}
For a fixed $d\ge 2$, the succinct game (from Section~\ref{sec:rules}) is the grandiose game $\M$. That is, the succint game rules suffices to play the grandiose game. For both games there is a constant time (in the heap sizes) and linear time (in the number of heaps) procedure to decide whether a given $d$-tuple is a move. 
\end{cor}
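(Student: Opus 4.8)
The plan is to establish the two assertions separately: first that the succinct game and the grandiose game have literally the same move set $\M$, and then that membership in $\M$ can be decided within the stated complexity. For the first, since both games declare \emph{every} vector subtraction $\x-\s\ge\0$ legal except for a forbidden family, it suffices to show that the two forbidden families coincide, i.e. that
\[
\{\,\s\in{\N_0\!}^d : \s \text{ satisfies a or b}\,\}=(\R-\R)\cap{\N_0\!}^d .
\]
I would organize the right-hand side by the residue of the last coordinate modulo $2^d-1$. A nonnegative difference $\r(n+k)-\r(n)$ has $s_d=k(2^d-1)\equiv 0$, while a rat vector $\r(n)=\r(n)-\0$ has $s_d=(2^d-1)n-2^{d-1}+1\equiv 2^{d-1}$; since $0\not\equiv 2^{d-1}\pmod{2^d-1}$ for $d\ge 2$, these two residue classes are disjoint and jointly exhaust $(\R-\R)\cap{\N_0\!}^d$ (with $\0$ belonging to the first class). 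Condition b (with its congruence $s_d\equiv 0$) should then match the proper shortcuts together with $\0$, and condition a (with $s_d\equiv 2^{d-1}$) the nonzero rat vectors.

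The b-half is immediate from Theorem~\ref{thm:playable}. Indeed, b(ii) says $2s_{i-1}\in\{s_i-1,s_i,s_i+1\}$, which is exactly $s_{i-1}\in\{\lfloor s_i/2\rfloor,\lceil s_i/2\rceil\}$, and b(i) is the congruence $s_d\equiv 0$; thus $\s$ satisfies b iff it has a ternary recurrence, iff (by Theorem~\ref{thm:playable}) it is a proper shortcut. The a-half is the computational core. Substituting the closed form (\ref{eq:rat}) gives $r_d(n)-2^{d-1}=(2^d-1)(n-1)$, which is a(i); and the coordinate recurrence of a(ii) reduces, via a floor manipulation in the spirit of Lemma~\ref{lem:shift}, to the elementary identity $r_{i-1}(n)=\lceil r_i(n)/2\rceil$, obtained by pulling the integer $2^{i-2}$ out of $\lceil(\lfloor (2^d-1)n/2^{d-i}\rfloor-2^{i-1}+1)/2\rceil$. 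Conversely, any $\s$ satisfying a is reconstructed uniquely from $s_d$ by repeated halving, and the congruence $s_d\equiv 2^{d-1}$ forces $s_d=r_d(n)$ for a unique $n\ge 1$, whence $\s=\r(n)$. Hence condition a characterizes $\R\setminus\{\0\}$, and combining with the b-half the two forbidden families agree, so the succinct game is the grandiose game $\M$.

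For playability, deciding whether a given $d$-tuple $\s$ lies in $\M$ amounts to checking that $\s$ fails both a and b. Each of a and b is tested by one congruence of $s_d$ modulo $2^d-1$ together with $d-1$ comparisons, each involving only the two consecutive entries $s_{i-1},s_i$; adding the $d$ comparisons that enforce $\x-\s\ge\0$, the whole test uses $O(d)$ arithmetic operations whose per-coordinate cost is constant in the heap sizes apart from the unavoidable magnitude comparisons, i.e. linear in the number of heaps and log-linear in the heap sizes. This is exactly the Definition of a playable ruleset, and together with Theorem~\ref{thm:existence} it yields the corollary. I expect the genuine obstacle to be the a-half of the second paragraph: carrying the nested floor/ceiling bookkeeping with the $2^{i-1}$ offsets through all coordinates, and proving that the downward halving reconstruction returns precisely the rat vector $\r(n)$ rather than a neighboring lattice point.
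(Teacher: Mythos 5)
Your argument is correct in substance and it follows the route the paper intends, but it is considerably more complete than the paper's own proof, which is the single sentence ``This follows from Theorem~\ref{thm:playable} and Theorem~\ref{thm:existence}.'' Those two theorems only cover half of what the corollary needs: they identify the ternary recurrence (equivalently, condition b) with the proper shortcuts and establish $P(\M)=\R$, but they say nothing about condition a. Your decomposition of $(\R-\R)\cap{\N_0\!}^d$ by the residue of $s_d$ modulo $2^d-1$ --- residue $0$ for $\0$ and the proper shortcuts, residue $2^{d-1}$ for the nonzero rat vectors, consistent with Lemma~\ref{lem:1} --- together with the explicit verification that condition a characterizes $\R\setminus\{\0\}$, is precisely the step the paper leaves implicit, and your computation $r_{i-1}(n)=\lceil r_i(n)/2\rceil$ (pulling $2^{i-2}$ out of the ceiling and using $\lceil(m+1)/2\rceil=\lfloor m/2\rfloor+1$) is the missing ingredient; the uniqueness of the downward reconstruction from $s_d\equiv 2^{d-1}$ also goes through, since $0\le s_d$ and $2^{d-1}<2^d-1$ force $s_d=r_d(n)$ for a unique $n\ge 1$. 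One point you should flag explicitly rather than pass over: your identity shows that a(ii) as printed, $s_i-1\le 2s_{i-1}\le s_i$ (i.e.\ $s_{i-1}=\lfloor s_i/2\rfloor$), is \emph{not} satisfied by rat vectors having an odd coordinate; e.g.\ for $d=3$, $\r(2)=(3,6,11)$ fails $2\cdot 6\le 11$. The inequalities must read $s_i\le 2s_{i-1}\le s_i+1$ (i.e.\ $s_{i-1}=\lceil s_i/2\rceil$) for the corollary to hold, and your proof silently uses this corrected form. Apart from that editorial discrepancy, and the minor care needed with $\0$ (which has a ternary recurrence but lies in $\R$, not in the set of proper shortcuts), your argument, including the $O(d)$ complexity count, is sound.
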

\begin{proof}
This follows from Theorem~\ref{thm:playable} and Theorem~\ref{thm:existence}.
\end{proof}

Let us give another play example, here with $d=4$. 
\begin{example}
Let $\x = (4, 7, 15, 29)$. Then $x_4 \not\equiv 0 \pmod{2^4-1}$. So $\x$ is a move. Let $\x = (4, 7, 15, 30)$. Then $x_4 \equiv 0 \pmod{2^4-1}$. In addition $30/2=15, \lfloor 15/2\rfloor =7$ and $\lceil 7/2\rceil = 4$, so $\x\in \R-\R$ is a shortcut.

How do you move from $(3,6,12,23)+(4, 7, 15, 30)=(7,13,27,53)$? The first vector is a P-position, but the second is a shortcut. Is there any other attainable P-position?
We must find a move of the form $(7-\rat{15n}{8}, 14-\rat{15n}{4}, 30-\rat{15n}{2}, 60-15n)$. The forth coordinate is correct, so we proceed by dividing by 2 and applying the floor function, and thus verify the third coordinate (for $n = 1,2,3$). For the second coordinate: is there an $n$ such that $$ 14-\rat{15n}{4}\in \left\{\rat{30-\rat{15n}{2}}{2},\rat{30-\rat{15n}{2}}{2}+1\right\}?$$
It turns out that $n=1$ also gives a shortcut, but for $n=3$, the move $(2,3,8,15)$ takes you to the P-position $(5,10,19,38)$. This type of positions makes the game not only playable from a trivial point of view, but hopefully also enjoyable, because good (but non-optimal) players will strive to be close to shortcuts, and there could be small  mistakes which flips the game over to the opponent.
\end{example}

%%%%%%%%%%%%%%%%%%%%%%%%%%%%%%%MATRIX%%%%%%%%%%%%%%%%%%%%%%%%%%%%%%%%%%%%%%%%%%%%%%%%
\section{The anatomy of rats: matrix representations}\label{sec:binary}

We have a general observation on the floor function.
\begin{lemma}\label{lem:xy}
For any $x,y\in \N$, $\rat{x}{y}-\rat{x-1}{y}= 1$, if $x \equiv 0\pmod{y}$, and otherwise $\rat{x}{y}-\rat{x-1}{y} = 0$.
\end{lemma}
\begin{proof}
Write $x = \alpha y +\beta$, with $0\le \beta < y$. This gives $\rat{x}{y}-\rat{x-1}{y} = \rat{\alpha y + \beta}{y}-\rat{\alpha y + \beta-1}{y} = \rat{\beta}{y}-\rat{\beta-1}{y}=1$ if and only if $\beta = 0$.
\end{proof}
\begin{nota}
For each column $j\in \{1,\ldots , d\}$, the \emph{gap} between the rows $n\ge 2$ and $n-1$ is $\Delta_j(n) := r_j(n)-r_j(n-1)$.
\end{nota}
\begin{lemma}\label{lem:Delta}
For all $n, j$, $\Delta_j(n) = 2^j$, unless $n\equiv 0\pmod {2^{d-j}}$, in which case $\Delta_j(n) = 2^j-1$.
\end{lemma}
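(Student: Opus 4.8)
The plan is to compute $\Delta_j(n) = r_j(n) - r_j(n-1)$ directly from the closed form in (\ref{eq:rat}). Since the additive constant $-2^{j-1}+1$ cancels in the difference, we have
\begin{align*}
\Delta_j(n) = \rat{(2^d-1)n}{2^{d-j}} - \rat{(2^d-1)(n-1)}{2^{d-j}}.
\end{align*}
I would then write the numerator as a telescoping sum over the $2^d-1$ unit increments of the numerator that occur between $(2^d-1)(n-1)$ and $(2^d-1)n$, so that Lemma~\ref{lem:xy} becomes directly applicable.

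More precisely, setting $y = 2^{d-j}$ and $x = (2^d-1)n$, I would express $\Delta_j(n)$ as the sum $\sum_{t=1}^{2^d-1}\bigl(\rat{(2^d-1)(n-1)+t}{y}-\rat{(2^d-1)(n-1)+t-1}{y}\bigr)$. By Lemma~\ref{lem:xy}, each summand is $1$ exactly when the numerator $(2^d-1)(n-1)+t$ is divisible by $y=2^{d-j}$, and $0$ otherwise. So $\Delta_j(n)$ equals the number of multiples of $2^{d-j}$ in the interval of $2^d-1$ consecutive integers $\{(2^d-1)(n-1)+1,\ldots,(2^d-1)n\}$. The key counting step is then to determine this count: in any run of $2^d-1$ consecutive integers, the number of multiples of $2^{d-j}$ is either $\lceil (2^d-1)/2^{d-j}\rceil = 2^j$ or $\lfloor (2^d-1)/2^{d-j}\rfloor = 2^j-1$, since $(2^d-1)/2^{d-j} = 2^j - 2^{j-d}$ is not an integer and lies strictly between $2^j-1$ and $2^j$.

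The heart of the argument is thus to pin down \emph{exactly when} the count is $2^j$ versus $2^j-1$, and to show this is governed by the congruence $n\equiv 0\pmod{2^{d-j}}$. I would analyze the position of the right endpoint $(2^d-1)n$ modulo $2^{d-j}$: writing $(2^d-1)n \equiv -n \equiv (2^{d-j}-n)\pmod{2^{d-j}}$ (since $2^d-1\equiv -1$), the number of multiples of $2^{d-j}$ in a window of length $2^d-1$ ending at $(2^d-1)n$ drops to the smaller value $2^j-1$ precisely when the endpoint sits in the ``unfavorable'' residue class, which works out to $n\equiv 0\pmod{2^{d-j}}$. I expect this residue bookkeeping to be the main obstacle: one must carefully track whether both endpoints of the window can simultaneously be multiples of $2^{d-j}$ and confirm that the exceptional residue is exactly $n\equiv 0$. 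A clean alternative that I would keep in reserve, avoiding the counting entirely, is to exploit the arithmetic periodicity already noted after (\ref{eq:rat}): each column is arithmetic periodic with saltus $2^d-1$ and period $2^{d-j}$, so that $r_j(n+2^{d-j}) = r_j(n) + (2^d-1)$. Summing the gaps over one full period forces $\sum_{n=1}^{2^{d-j}}\Delta_j(n) = 2^d-1 = 2^j\cdot 2^{d-j} - 1$; combined with the two-valued bound $\Delta_j(n)\in\{2^j-1,2^j\}$, exactly one gap per period must equal $2^j-1$, and identifying that one gap as the one at $n\equiv 0$ completes the proof.
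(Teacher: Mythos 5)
Your set-up is sound and genuinely different in flavor from the paper's proof: the paper pulls $2^dn/2^{d-j}=2^jn$ out of the floor in one line and applies Lemma~\ref{lem:xy} directly to $n$ and $y=2^{d-j}$, whereas you telescope the numerator and count multiples of $2^{d-j}$ in a window of $2^d-1$ consecutive integers. Both routes correctly yield the two-valued bound $\Delta_j(n)\in\{2^j-1,2^j\}$ and both reduce to the same residue determination. The problem is that your proposal stops exactly at that determination: you assert the unfavorable class ``works out to $n\equiv 0\pmod{2^{d-j}}$'' and explicitly defer the bookkeeping, and when the bookkeeping is carried out it does not give that class. Writing $y=2^{d-j}$, the number of multiples of $y$ in the window $\bigl((2^d-1)(n-1),\,(2^d-1)n\bigr]$ is $\lfloor (2^d-1)n/y\rfloor-\lfloor (2^d-1)(n-1)/y\rfloor$; since $(2^d-1)m=2^jmy-m$ gives $\lfloor (2^d-1)m/y\rfloor=2^jm+\lfloor -m/y\rfloor=2^jm-\lceil m/y\rceil$, the count equals $2^j-\lceil n/y\rceil+\lceil (n-1)/y\rceil$, and the ceiling increments precisely at $n\equiv 1\pmod y$. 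So the short gap $2^j-1$ occurs exactly when $n\equiv 1\pmod{2^{d-j}}$, not when $n\equiv 0$.

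This is not a pedantic point: for $d=2$, $j=1$ the first column is $1,3,4,6,7,9,\ldots$, with gaps $2,1,2,1,\ldots$, so $\Delta_1(n)=2^1-1$ at odd $n$; for $d=3$, $j=1$ the column $1,3,5,7,8,10,12,14,15,\ldots$ has its short gap at $n\equiv 1\pmod 4$. Your computation, done correctly, therefore contradicts the residue class you claim (and the statement as printed); the paper's own displayed identity $\Delta_j(n)=2^j+\rat{n-1}{2^{d-j}}-\rat{n}{2^{d-j}}$ contains the same floor-versus-ceiling slip, since the correct reduction is $2^j-\lceil n/2^{d-j}\rceil+\lceil (n-1)/2^{d-j}\rceil$. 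Your reserve argument via arithmetic periodicity does force exactly one short gap per period of length $2^{d-j}$, but again leaves the location of that gap---the only contentful part of the lemma---unproved, and locating it would once more give $n\equiv 1$. To complete the proposal you must actually execute the residue computation and state the class it produces, rather than asserting the one in the statement.
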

\begin{proof}
For all $n,j$, $\Delta_j(n) = \rat{(2^d-1)n}{2^{d-j}} - \rat{(2^d-1)(n-1)}{2^{d-j}} = 2^j + \rat{n-1}{2^{d-j}} - \rat{n}{2^{d-j}}$. The result follows by Lemma~\ref{lem:xy}. \end{proof}

The standard form is not too convenient to work with, mainly because of the floor function. We review the equivalent matrix representation and begin with an example.

\begin{example}\label{ex:rat}
The case $d=4$ was dubbed \emph{fat rat} \cite{Fra2015}; recall the standard form of the $P$-positions without $\0$, $n\ge 1$,
\begin{equation}\label{equation}
\r(n) = \left(\left\lfloor\frac{15}{8}n\right\rfloor, \left\lfloor\frac{15}{4}n\right\rfloor-1, \left\lfloor\frac{15}{2}n\right\rfloor-3, 15n-7\right).
\end{equation}
Let us list the first $11$ expansions of the standard form, and using $t=\lfloor (n-1)/2^{d-1}\rfloor\ge 0$, with $n\ge 1$,
\begin{equation*}
\begin{matrix}
n & r_1(n) & r_2(n) & r_3(n) & r_4(n)\\
1 & 15t+1 & 30t+2 & 60t+4 & 120t+8\\
2 & 15t+3 & 30t+6 & 60t+12 &120t+23\\
3 & 15t+5 & 30t+10 & 60t+19 &120t+38\\
4 & 15t+7 & 30t+14 & 60t+27&120t+53\\
5 & 15t+9 & 30t+17 & 60t+34 &120t+68\\
6 & 15t+11 & 30t+21 & 60t+42 &120t+83\\
7 & 15t+13 & 30t+25 & 60t+49 &120t+98\\
8 & 15t+15 & 30t+29 & 60t+57 & 120t+113\\
9 & 15t+1 & 30t+2 & 60t+4 & 120t+8\\
10 & 15t+3 & 30t+6 & 60t+12 & 120t+23\\
11 & 15t+5 & 30t+10 & 60t+19 & 120t+38
\end{matrix}
\end{equation*}

Notice the periodicity after the first $8$ rows, modulus $15j$, $j=1,2,3,4$ in the respective columns.\footnote{The reader is encouraged to check that the values of $\r(n)$, as $n$ ranges from $1$ to $11$, are identical to the 11 rows of the matrix. For example, for $n=6$, the value of (\ref{equation}) is $(11, 21, 42, 83)$, the same as the line $n=6$, $t=\lfloor 6/8\rfloor=0$ of $\mathcal R_4$. For $n=9$, (\ref{equation} yields $(16, 32, 64, 128)$, same as row 9 of $\mathcal R_4$ with $t = \lfloor 9/8\rfloor = 1$. }

\end{example}

\subsection{In the rat wheel}

Let $\x = (x_1, x_2,\ldots , x_d)$. The periodicity of the standard form, together with the following lemma, motivates us to define $\x \pmod {n}_2$ as the vector $$(x_1\pmod {n}, x_2\pmod {2n}, \ldots , x_d \pmod {2^{d-1}n}).$$

%The periodicity of the standard form, together with the following lemma, motivates us to define $\x \pmod {2^d-1}$ as the vector $$(x_1\pmod {2^d-2^0}, x_2\pmod {2^{d+1}-2^1}, \ldots , x_d \pmod {2^{2d-1}-2^{d-1}}).$$

\begin{lemma}\label{lem:ratwheel}
In the standard form, for all rows $n$, and each column $j$, $$r_{j}(n)\equiv r_{j}(n+ 2^{d-j})\pmod {2^d-1}.$$ Moreover, for all $n$, $\r(n)\equiv \r(n + 2^{d-1})\pmod {2^{d}-1}_2$.  \end{lemma}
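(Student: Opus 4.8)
The plan is to observe that both congruences are in fact \emph{exact} arithmetic increments, each following from the single elementary fact that $\lfloor z + c\rfloor = \lfloor z\rfloor + c$ whenever $c$ is an integer. So rather than reason about congruences directly, I would compute the precise difference $r_j(n+p) - r_j(n)$ for the appropriate period $p$ and read off the stated congruence at the end. This also dovetails with the arithmetic periodicity already noted after~(\ref{eq:rat}).

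For the first assertion I would take the period $p = 2^{d-j}$ for column $j$ and write, using~(\ref{eq:rat}),
\[
r_j(n+2^{d-j}) - r_j(n) = \rat{(2^d-1)(n+2^{d-j})}{2^{d-j}} - \rat{(2^d-1)n}{2^{d-j}}.
\]
The argument of the first floor equals $\frac{(2^d-1)n}{2^{d-j}} + (2^d-1)$, and since $2^d-1$ is an integer it pulls out of the floor, giving the exact value $r_j(n+2^{d-j}) - r_j(n) = 2^d-1$. In particular the difference is divisible by $2^d-1$, which is precisely the first congruence. As a cross-check, the same value also drops out of Lemma~\ref{lem:Delta} by telescoping $\Delta_j$ over the $2^{d-j}$ consecutive rows, since exactly one of them meets the condition $n\equiv 0 \pmod{2^{d-j}}$, contributing $(2^{d-j}-1)2^j + (2^j-1) = 2^d-1$.

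For the second assertion I would first unwind the notation $\pmod{2^d-1}_2$: its $j$-th coordinate is read modulo $2^{j-1}(2^d-1)$, so the claim $\r(n)\equiv \r(n+2^{d-1})\pmod{2^d-1}_2$ amounts to showing $r_j(n+2^{d-1}) - r_j(n) \equiv 0 \pmod{2^{j-1}(2^d-1)}$ for every $j$. Repeating the computation above with the \emph{uniform} shift $2^{d-1}$, the argument of the floor increases by $\frac{(2^d-1)2^{d-1}}{2^{d-j}} = 2^{j-1}(2^d-1)$, again an integer, so it factors out and
\[
r_j(n+2^{d-1}) - r_j(n) = 2^{j-1}(2^d-1),
\]
which is exactly the required multiple of the column-$j$ modulus $2^{j-1}(2^d-1)$.

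There is essentially no hard step here; the only point demanding care is the bookkeeping in the second part, namely correctly matching the per-column modulus $2^{j-1}(2^d-1)$ dictated by the $\pmod{\cdot}_2$ notation against the computed increment, and verifying that the shift exponent $d-1$ combines with $2^{d-j}$ so that $\tfrac{2^{d-1}}{2^{d-j}}=2^{j-1}$ makes the increment an exact integer multiple for every $j$ simultaneously.
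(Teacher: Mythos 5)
Your proof is correct and follows essentially the same route as the paper: both pull the integer increment of the floor's argument outside the floor to get the exact differences $r_j(n+2^{d-j})-r_j(n)=2^d-1$ and, for the uniform shift, $r_j(n+2^{d-1})-r_j(n)=2^{j-1}(2^d-1)$. Your treatment of the second assertion is in fact more explicit than the paper's one-line remark about ``maximizing the period,'' since you directly match the increment against the per-column modulus $2^{j-1}(2^d-1)$ of the $\pmod{\cdot}_2$ notation.
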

\begin{proof} For all $n$, for all $j$,
\begin{align*}
 r_j(n+2^{d-j}) - r_j(n) &= \rat{(2^d-1)(n + 2^{d-j})}{2^{d-j}} - 2^{j-1} + 1 - \left(\rat{(2^d-1)n}{2^{d-j}}-2^{j-1} + 1\right)\\
 &= \rat{(2^d-1)n+2^{d-j}(2^d-1)}{2^{d-j}} - \rat{(2^d-1)n}{2^{d-j}}\\
 &= 2^d-1.
\end{align*}
By maximizing the period (which is always a multiple of 2) at $j = 1$, we obtain the desired periodicity of the rat vectors.
\end{proof}

Since we have this periodic behavior of the rat vectors, for a given number of heaps, it is convenient to represent them in matrix notation. In fact, when we code them modulo $2^{d}-1$, and given the first column, there is a simple bijection with the binary numeration system, which is proved in Theorem~\ref{thm:binrat}, in this section.  This can be seen for any dimension $d$ by studying the saltus and period of the system, as observed in Section~\ref{sec:sequences}. Here, we give a proof of independent interest, using Lemma~\ref{lem:unit}.

\begin{definition}[Binary matrices]
We denote the entry in the $i$th row and the $j$th column of the rat matrix by $R_{i,j}(t)$, where $t\in \N$ is a variable (motivated by Example~\ref{ex:rat}).
Denote the $i$th row, $i\in \{0, \ldots , 2^{d-1}-1\}$, of the \emph{rat-matrix} by $R_i(t)$, $n\in \N_0$. Let $R_{i,1}(t) = (2^{d}-1)t + 2i+1$, and for $j\in \{2,\ldots , d\}$,  $R_{i,j}(t) = 2R_{i,j-1}(t)-b_{i, d-j},$ where $b_i = b_{i,d-2}\cdots b_{i,0}$ is the number $i$ represented in binary.
\end{definition}
Note that, by using the binary representation, it is natural to index the rows of the rat-matrix by $i\in \{0, \ldots , 2^{d-1} - 1\}$ (but in the standard form, we follow the tradition, and start the indexing of rows with $n = 1, 2, \ldots $).

We have the following identity, between consecutive rows in the infinite matrix.
\begin{lemma}[A rat-gap identity]\label{lem:unit}
For any $d\ge  2$, and any $n\ge 2$,
\begin{align}\label{eq:unit}
\sum_{j\in \{2,\ldots ,d\}} 2^{d-j+1}\Delta_{j-1}(n)-2^{d-j}\Delta_j(n) = 1.
\end{align}
\end{lemma}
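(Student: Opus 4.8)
The plan is to verify the identity~(\ref{eq:unit}) by substituting the values of $\Delta_j(n)$ from Lemma~\ref{lem:Delta} and showing that the telescoping-like sum collapses to $1$. First I would observe that the summand $2^{d-j+1}\Delta_{j-1}(n) - 2^{d-j}\Delta_j(n)$ is engineered so that, if every gap $\Delta_j(n)$ equalled its ``generic'' value $2^j$, each term would vanish: indeed $2^{d-j+1}\cdot 2^{j-1} - 2^{d-j}\cdot 2^j = 2^d - 2^d = 0$. So the entire sum is driven purely by the $-1$ corrections that Lemma~\ref{lem:Delta} introduces whenever the period divides $n$.

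Next I would make the correction terms explicit. By Lemma~\ref{lem:Delta}, write $\Delta_j(n) = 2^j - \epsilon_j(n)$, where $\epsilon_j(n) = 1$ if $n\equiv 0 \pmod{2^{d-j}}$ and $\epsilon_j(n) = 0$ otherwise. Substituting and using the vanishing of the generic part, the left-hand side of~(\ref{eq:unit}) becomes
\begin{align*}
\sum_{j=2}^{d}\left(-2^{d-j+1}\epsilon_{j-1}(n) + 2^{d-j}\epsilon_j(n)\right).
\end{align*}
I would then reindex the two pieces to exhibit a telescoping cancellation: the coefficient of $\epsilon_k(n)$ coming from the $\Delta_k$-term (at $j=k$, weight $+2^{d-k}$) should cancel against the coefficient coming from the $\Delta_{(k+1)-1} = \Delta_k$-term (at $j=k+1$, weight $-2^{d-k}$). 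After this telescoping, only the two boundary contributions survive, namely the $\epsilon_1(n)$ term at $j=2$ and the $\epsilon_d(n)$ term at $j=d$.

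The main work is then a careful bookkeeping of these surviving boundary terms. The $j=d$ term contributes $2^{d-d}\epsilon_d(n) = \epsilon_d(n)$, and since $\epsilon_d(n) = 1$ exactly when $n\equiv 0\pmod{2^{0}}=1$, which holds for \emph{every} $n$, this term always equals $1$. The $j=2$ boundary term contributes $-2^{d-1}\epsilon_1(n)$, where $\epsilon_1(n) = 1$ iff $n\equiv 0\pmod{2^{d-1}}$. For the identity to hold as stated for all $n\ge 2$, this term must vanish, so I expect the statement implicitly restricts to rows $n$ \emph{within a single period block}, i.e. $n\not\equiv 0\pmod{2^{d-1}}$ (equivalently, to the matrix rows $i\in\{0,\ldots,2^{d-1}-1\}$ of the preceding definition rather than to the period-boundary rows). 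Confirming this boundary behavior — and reconciling it with the intended range of $n$ in the infinite matrix versus the finite rat-matrix — is the step I expect to be the main obstacle; the interior telescoping is purely mechanical, but getting the endpoint corrections to produce exactly $1$ rather than $1 - 2^{d-1}$ requires pinning down precisely which $n$ the lemma means to cover.
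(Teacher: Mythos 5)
Your decomposition $\Delta_j(n)=2^j-\epsilon_j(n)$, the cancellation of the generic part, and the telescoping of the $-1$ corrections is exactly the mechanism of the paper's proof: the paper isolates the smallest column index $\gamma$ with a deficient gap and pairs the contribution $+2^{d-k}\epsilon_k$ (from the summand $j=k$) against $-2^{d-k}\epsilon_k$ (from the summand $j=k+1$), leaving only the endpoint $+2^{d-d}\epsilon_d(n)=1$. So the interior of your argument coincides with the paper's.

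Your endpoint bookkeeping, however, is sharper than the paper's, and the obstacle you flag is genuine. The paper's chain of cancellations begins with the term ``$-2^{d-\gamma}(-1)$'', i.e.\ the contribution of $\epsilon_\gamma$ through the summand $j=\gamma$, which exists only when $\gamma\ge 2$; when $\gamma=1$ the correction to $\Delta_1$ enters solely through the summand $j=2$ with coefficient $-2^{d-1}$ and has no cancelling partner, so the sum is $1-2^{d-1}$, exactly as you computed. This case really occurs: for $d=2$, $n=3$ one has $\r(2)=(3,5)$ and $\r(3)=(4,8)$, hence $2\Delta_1(3)-\Delta_2(3)=2\cdot 1-3=-1\ne 1$; for $d=3$, $n=5$ the sum is $-3$. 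So Lemma~\ref{lem:unit} is false as stated for one row per period of $2^{d-1}$, and the paper's proof silently assumes $\gamma\ge 2$. Your proposed repair --- restricting to rows interior to a period block --- is the correct one, and it is all that the application in Theorem~\ref{thm:binrat} requires, since there the identity is invoked only to match $b_i-b_{i-1}=1$ for consecutive rows $i-1,i$ inside one rat-matrix block, the block's first row being checked directly. One caveat on \emph{which} rows to exclude: the congruence in Lemma~\ref{lem:Delta} is itself off by one (direct computation gives $\Delta_j(n)=2^j-1$ precisely when $n\equiv 1\pmod{2^{d-j}}$, e.g.\ $\Delta_1(3)=1$ for $d=2$), so the bad rows are $n\equiv 1\pmod{2^{d-1}}$ rather than $n\equiv 0$; your argument goes through verbatim once $\epsilon_j$ is redefined accordingly.
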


\begin{proof}
For each row $n\ge 2$, there is a smallest indexed column $\gamma$, such that $n\equiv 0\pmod 2^{d-j}$, and so, by Lemma~\ref{lem:Delta}, $\Delta_\gamma(n) = 2^\gamma - 1$ (note, for all rows, $\Delta_d(n) = 2^d - 1$).  It follows that $\Delta_\rho(n) = 2^{\rho} - 1$, for all $\rho\ge \gamma$. By Lemma~\ref{lem:Delta}, in the expression (\ref{eq:unit}), the powers of 2 will get cancelled, so we are only concerned with the part $-2^{d - \gamma}(-1) + 2^{d-\gamma}(-1)-2^{d-\gamma -1}(-1) + 2^{d-\gamma-1}(-1)-2^{d-\gamma -2}(-1) + \ldots + 2^{d-d+1}(-1)-2^{d-d}(-1)=1$.
\end{proof}

\begin{theorem}[Rats are binary]\label{thm:binrat}
Let $d\ge 2$. For all $n\in \N$, $$\r(n) = R_{\,n-1\!\!\!\pmod {2^{d-1}}}\left({\rat {n-1}{2^{d-1}}}\right).$$
\end{theorem}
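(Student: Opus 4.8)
The plan is to prove the two representations of $\r(n)$ coincide coordinate by coordinate, by induction on the column index $j$, while simultaneously tracking how the row index $n$ and the variable $t$ translate between the standard form and the binary matrix. First I would fix $n\in\N$ and set $t = \rat{n-1}{2^{d-1}}$ and $i = n-1 \pmod{2^{d-1}}$, so that $n-1 = 2^{d-1}t + i$ with $0\le i \le 2^{d-1}-1$. The goal is to show $r_j(n) = R_{i,j}(t)$ for all $j\in\{1,\ldots,d\}$. For the base case $j=1$, I would compute $r_1(n) = \rat{(2^d-1)n}{2^{d-1}}$ directly from (\ref{eq:rat}) and check it equals $R_{i,1}(t) = (2^d-1)t + 2i + 1$; this is a routine floor-function manipulation using $n = 2^{d-1}t + i + 1$, where one separates the $2^{d-1}t$ part (which contributes exactly $(2^d-1)t$ since $2^{d-1}\mid 2^{d-1}t$) from the residual $\rat{(2^d-1)(i+1)}{2^{d-1}}$, which should evaluate to $2i+1$ for the range of $i$ in question.

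The inductive step is the heart of the argument. Assuming $r_{j-1}(n) = R_{i,j-1}(t)$, I would show $r_j(n) = 2 r_{j-1}(n) - b_{i,d-j}$, matching the recurrence defining $R_{i,j}(t)$. Using (\ref{eq:rat}), the relation between consecutive columns is
\begin{align*}
r_j(n) - 2r_{j-1}(n) = \rat{(2^d-1)n}{2^{d-j}} - 2\rat{(2^d-1)n}{2^{d-j+1}} + (2^{j-1} - 1),
\end{align*}
and by Lemma~\ref{lem:shift} applied with $y = \frac{(2^d-1)n}{2^{d-j+1}}$ the floor-difference $\rat{(2^d-1)n}{2^{d-j}} - 2\lfloor y\rfloor$ lies in $\{0,1\}$, so $r_j(n) - 2r_{j-1}(n)$ takes one of two consecutive integer values. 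It remains to identify exactly which value occurs, and to verify it equals $-b_{i,d-j}$, i.e.\ that it is governed precisely by the $(d-j)$th binary digit of $i$. This is where I expect the main obstacle to lie: one must pin down the parity/carry behavior of $\rat{(2^d-1)n}{2^{d-j+1}}$ as a function of $i$, showing that the relevant floor picks up a fractional correction exactly when the corresponding bit of $i$ is set. I would handle this by writing $(2^d-1)n = 2^d n - n$ and analyzing the residue of $n = 2^{d-1}t + i + 1$ modulo the appropriate power of $2$, so that the fractional part of $\frac{(2^d-1)n}{2^{d-j+1}}$ is determined by the low-order bits of $i$.

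An attractive alternative, which may sidestep the bit-chasing, is to avoid direct computation and instead use the counting/periodicity machinery already assembled. By Lemma~\ref{lem:ratwheel}, the standard form is periodic with period $2^{d-1}$ modulo $2^d-1$ in the wheel sense, and the rat-matrix is defined to have exactly $2^{d-1}$ rows; so it suffices to verify the identity for the $2^{d-1}$ residues $i$ within one period together with the correct additive shift by $(2^d-1)t$ across periods. The cross-period shift is immediate from the computation in Lemma~\ref{lem:ratwheel} (each increment of $n$ by $2^{d-1}$ adds $2^d-1$ to every column modulo the wheel), reducing everything to the single block $t=0$, $i\in\{0,\ldots,2^{d-1}-1\}$. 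On that block the recurrence $R_{i,j} = 2R_{i,j-1} - b_{i,d-j}$ is finite and can be matched against Lemma~\ref{lem:Delta} and the rat-gap identity of Lemma~\ref{lem:unit}, which together dictate how successive rows and columns differ; I would use these gap identities to show that the binary-digit recurrence reproduces exactly the differences $\Delta_j(n)$ forced by the standard form, thereby identifying the two matrices. Either route reduces the theorem to the same combinatorial core—the correspondence between the binary digits $b_{i,d-j}$ and the floor-function carries—so I would present the direct induction as the main line and invoke Lemma~\ref{lem:unit} to organize the digit bookkeeping cleanly.
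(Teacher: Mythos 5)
Your main line is sound, and it takes a genuinely different route from the paper. The paper's proof fixes the column structure and inducts over the \emph{rows}: it verifies the row $n=1$ and the first column directly, and then uses the rat-gap identity (Lemma~\ref{lem:unit}) to show that the weighted sum $\sum_j 2^{d-j}\bigl(2r_{j-1}(n)-r_j(n)\bigr)$ increases by exactly $1$ as $n$ increases by $1$, so that it equals $i$ and the quantities $2r_{j-1}(n)-r_j(n)$ must be the binary digits of $i$. You instead fix $n$ and work column by column, identifying each digit $b_{i,d-j}=2r_{j-1}(n)-r_j(n)$ individually by a parity computation. The step you flag as the obstacle does close exactly as you propose: writing $(2^d-1)n=2^dn-n$ gives $\lfloor (2^d-1)n/2^{d-j}\rfloor = 2^jn-\lceil n/2^{d-j}\rceil$, and with $n=2^{d-1}t+i+1$ one gets $\lceil n/2^{d-j}\rceil = 2^{j-1}t+\lfloor i/2^{d-j}\rfloor+1$, whose parity for $j\ge 2$ is that of $1+b_{i,d-j}$; combined with your observation via Lemma~\ref{lem:shift} that $2r_{j-1}(n)-r_j(n)\in\{0,1\}$, with value $1$ precisely when $\lfloor (2^d-1)n/2^{d-j}\rfloor$ is even, this yields $2r_{j-1}(n)-r_j(n)=b_{i,d-j}$ as required. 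Your route is more self-contained (it needs only Lemma~\ref{lem:shift} and elementary floor manipulation, not Lemmas~\ref{lem:xy}, \ref{lem:Delta} and \ref{lem:unit}) and it pins down each digit explicitly; the paper's route avoids per-digit parity analysis but tacitly relies on each term $2r_{j-1}(n)-r_j(n)$ lying in $\{0,1\}$ so that the weighted-sum identity determines the digits uniquely --- a point your column-wise argument makes explicit. Your ``attractive alternative'' is essentially the paper's proof, so presenting the direct induction as the main line is the genuinely new contribution.
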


\begin{proof}

For each $n\in \N$, we must verify that, with $t = {\rat {n-1}{2^{d-1}}}$, row $n-1$ in the rat-matrix corresponds with row $n$ in the standard form: recall, for all columns $j$, $$r_{j}(n) = \rat{(2^d-1)n}{2^{d-j}}-2^{j-1} + 1.$$

We study the gaps of the  entries in the columns of the respective forms. First we show that they correspond within a rat-matrix, and then we demostrate that the glueing of matrices gives back the infinite form. Recall that $\Delta_{j}(n) = r_{j}(n)-r_j(n-1)$.

Let us begin by showing that row 0 in the rat-matrix corresponds to the first row in the standard form. Since $R_{0,1}(t) = (2^{d}-1)t +1$ and, by $b_0 = \0$, then, for all $j\in \{1,\ldots , d\}$, $R_{0,j}(0) =  2^{j-1}$. Also
\begin{align}
r_{j}(1) &= \rat{2^d-1}{2^{d-j}}-2^{j-1} + 1\\ &= \rat{2^d}{2^{d-j}}+\rat{-1}{2^{d-j}}-2^{j-1} + 1\\ &= \frac{2^d}{2^{d-j}}-1-2^{j-1} + 1\\ &=2^{j-1},
\end{align}
for all $j$.

We want to show that for any row $n$, $\r(n) = R_{i}({\rat {n-1}{2^{d-1}}})$, with $n-1 = \alpha2^{d-1}+i$, for some non-negative integer $\alpha$, and where
\begin{align}\label{eq:ibounds}
0\le i <2^{d-1}.
\end{align}
We begin by showing that the first entries correspond, and note that the last equality in both simplifications follow by (\ref{eq:ibounds}).

\begin{align*}
r_{1}(n)&=\rat{(2^d-1)n}{2^{d-1}}\\
&= \rat{(2^d-1)(\alpha2^{d-1}+i+1)}{2^{d-1}}\\
&= \alpha(2^d-1) + \rat{(2^d-1)(i+1)}{2^{d-1}}\\
&= \alpha(2^d-1) + 2i+2+\rat{-i-1}{2^{d-1}}\\
&= \alpha(2^d-1) + 2i+2-1
\end{align*}

\begin{align*}
R_{i,1}\left(\rat{t-1}{2^{d-1}}\right) &= (2^d-1)\rat{t-1}{2^{d-1}}+2i+1\\
&= (2^d-1)\rat{\alpha 2^{d-1}+i}{2^{d-1}}+2i+1\\
&= (2^d-1)\left(\alpha +\rat{i}{2^{d-1}}\right)+2i+1\\
&= \alpha(2^d-1) + \rat{i}{2^{d-1}}(2^d-1) + 2i + 1\\
&= \alpha(2^d-1) + 2i + 1
\end{align*}

Next, for all $j\in \{2,\ldots , d\}$, we show that
$$b_{i, d-j} = 2R_{i,j-1}(t)-R_{i,j}(t) = 2r_{j-1}(n+1)-r_{j}(n+1).$$
Of course $$b_i-b_{i-1}=\sum 2^{d-j}b_{i,d-j}-\sum 2^{d-j}b_{i-1,d-j} = 1.$$
Hence, it suffices to show that
$$\sum 2^{d-j} (2r_{ j-1}(n+1)-r_{j}(n+1)-(2r_{j-1}(n)-r_j(n))) = 1,$$
that is that
$$\sum 2^{d-j+1}(r_{j-1}(n+1)-r_{j-1}(n))+2^{d-j}(r_j(n)-r_{j}(n+1))=1,$$
that is that $$\sum 2^{d-j+1}\Delta_{j-1}(n+1) - 2^{d-j}\Delta_j(n+1) = 1.$$
This follows by Lemma~\ref{lem:unit}.
\end{proof}

\section{The rats' ternary shortcuts}\label{sec:ternary}
In matrix notation, we will list the proper shortcuts excluding the sequence of rat vectors. Let $\F=(\R-\R)\setminus\R$.

We will see that there is a natural (unique) order of the vectors, $\f\in \F$ by letting, for all $i>0$, $\sum_j f_{i, j}  > \sum_j f_{i-1, j}$, and we will regard $\F$ as this infinite matrix on $d$ columns.
Since $R$ is finite, we define the (proper) \emph{shortcut matrix} $F$ ($F$ for forbidden subtractions), which will also be finite, with $d$ columns, and we prove that it contains $3^{d-1}$ rows, using the natural ternary representations, obtained as a consequence of the binary representation of the rat-matrix.\footnote{Thus the proof gives a bit more information than the statement, and we use our understanding of the structure to find the natural order of the rows in $F$. In this section, we abuse notation and say `shortcut matrix' instead of the somewhat lengthy `proper shortcut matrix'.}

 The $(d-1)$-dimensional vector $\t$ is \emph{ternary} if, for all $0\le j \le d-2$, $t_j\in \{0,1,2\}$.

\begin{definition}\label{lem:uniqueternary}
Index the vectors in the set $\{\r-\r'\not\in \R\mid \r,\r'\in R\}$ in increasing right-to-left lexicographic order\footnote{Row $f_i$ is before row $f_j$ if column $k$ is the rightmost  column where they differ, and then $f_{i,k}<f_{j,k}$}, and let $F$ denote the \emph{shortcut matrix} where $\f_i$ is the $i$th row from the top, and starting with row 0.
\end{definition}

\begin{theorem}\label{thm:shortcut}
The shortcut matrix $F$ contains exactly $3^{d-1}$ distinct rows.
\end{theorem}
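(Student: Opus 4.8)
The plan is to establish a bijection between the rows of the shortcut matrix $F$ and the ternary vectors $\t = (t_0, \ldots, t_{d-2})$ with each $t_j \in \{0,1,2\}$, which immediately yields the count $3^{d-1}$. By Theorem~\ref{thm:playable}, a vector $\x$ is a proper shortcut precisely when it has a ternary recurrence: $x_d \equiv 0 \pmod{2^d-1}$ and $x_{i-1} \in \{\lfloor x_i/2\rfloor, \lceil x_i/2\rceil\}$ for all $i$. The starting observation is that, going from column $d$ down to column $1$, each step from $x_i$ to $x_{i-1}$ involves exactly one binary choice (floor versus ceiling), except that when $x_i$ is even the floor and ceiling coincide and there is no genuine choice. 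So the combinatorial data determining a shortcut is a sequence of $d-1$ choices, and I would encode each such choice as the ternary digit $t_{d-1-i}$ describing the relationship $2x_{i-1} - x_i \in \{-1, 0, 1\}$.

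First I would make precise how the ternary vector is read off from a shortcut. Working in analogy with the binary representation of the rat-matrix (the identity $b_{i,d-j} = 2R_{i,j-1}(t) - R_{i,j}(t)$ in Theorem~\ref{thm:binrat}), I would set, for each column $j \in \{2,\ldots,d\}$, the quantity $t_{d-j} := 2x_{j-1} - x_j \in \{-1, 0, 1\}$, and then shift this to the digit set $\{0,1,2\}$ via $t_{d-j} + 1$. The ternary-recurrence condition guarantees exactly that each of these lies in $\{-1,0,1\}$, so every proper shortcut determines a genuine ternary vector. This gives a well-defined map $F \to \{0,1,2\}^{d-1}$.

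Next I would argue this map is a bijection. For injectivity and surjectivity simultaneously, I would reconstruct $\x$ from $\t$: fixing $x_d$ to be the smallest positive multiple of $2^d - 1$ consistent with the construction (this is the normalization already used implicitly in the ``only if'' direction of Theorem~\ref{thm:playable}, where one starts with $x_d$ and works down), the recurrence $x_{j-1} = (x_j + t_{d-j})/2$ determines each lower coordinate uniquely once we verify it yields an integer. The integrality is the crux: one must check that the parity of $x_j + t_{d-j}$ is always even, i.e. that the chosen digit $t_{d-j}$ has the same parity as $x_j$. This is where the constraint from the rat structure enters — not every abstract ternary string gives an integer vector with a single fixed $x_d$, so I expect the careful bookkeeping will show instead that varying $x_d$ over the $2^{d-1}$ residue-compatible multiples, together with the digit choices, collapses to exactly $3^{d-1}$ distinct vectors modulo the periodicity established in Lemma~\ref{lem:ratwheel}.

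The main obstacle, and the step I would spend the most care on, is precisely this integrality/parity matching and the proof that distinct ternary vectors give distinct rows of $F$ (no two collapse to the same shortcut) while no proper shortcut is omitted. The right-to-left lexicographic ordering in Definition~\ref{lem:uniqueternary} should correspond exactly to reading $\t$ as a ternary number, so I would verify that the ordering of rows matches the natural ordering of ternary vectors, which simultaneously confirms that each ternary vector indexes a distinct row. I would lean on Theorem~\ref{thm:playable} to know the rows of $F$ are \emph{exactly} the ternary-recurrence vectors (up to the periodicity that makes $F$ finite), and on Lemma~\ref{lem:shift} to control how the floor-halving interacts with integrality, concluding that the count is $3^{d-1}$.
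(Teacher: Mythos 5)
There is a genuine gap, and it sits exactly where you say you would ``spend the most care.'' Your surjectivity argument --- that every ternary vector is realized by a row of $F$ --- leans on the ``if'' direction of Theorem~\ref{thm:playable} (ternary recurrence $\Rightarrow$ proper shortcut). But the paper explicitly proves that direction \emph{using} the shortcut matrix of Section~\ref{sec:ternary}, i.e.\ using Theorem~\ref{thm:shortcut} itself; the paper even warns that only the ``only if'' direction is independent of the later sections. So invoking it here is circular. Without it, your reconstruction $x_{j-1}=(x_j+t_{d-j})/2$ at best produces a vector with a ternary recurrence; it does not show that vector lies in $\R-\R$, which is what membership in $F$ requires. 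Moreover the count you defer to ``careful bookkeeping'' is genuinely the hard part of your route: once $x_d=k(2^d-1)$ is fixed, integrality forces the digit $t_{d-j}$ to match the parity of $x_j$, so the admissible choices form the binary tree $T^d$ of Definition~\ref{def:tree}, whose leaf counts per $k$ are irregular --- the paper only \emph{conjectures} their pattern in Section~\ref{sec:lastcol}. Summing those leaf counts over the $2^{d-1}$ values of $k$ to get $3^{d-1}$ is not something you can wave at.

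The paper sidesteps both problems by going through the binary matrix representation (Theorem~\ref{thm:binrat}) rather than the ternary recurrence: a row of $F$ is a difference of two rows of the rat-matrix, indexed by binary strings $b_i,b_k$ of length $d-1$, and the associated ternary vector is $\t=b_i-b_k+1$. Surjectivity onto $\{0,1,2\}^{d-1}$ is then immediate (each digit of $\{0,1,2\}$ is a difference of two bits plus one), and the substance of the proof is the well-definedness check: the $2^u$ distinct pairs $(b_i,b_k)$ realizing a given $\t$ (where $u$ is the number of $1$s in $\t$) all produce the same row of $F$, verified by comparing the first and last columns via the recursion (\ref{eq:recursively}). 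If you want to repair your proof, you need to replace the top-down reconstruction from $x_d$ with an argument that exhibits, for each ternary vector, an actual pair of rat vectors realizing it --- which is essentially the paper's binary-pairing argument.
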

\begin{proof}
We show that each  $(d-1)$-dimensional ternary vector $t$, describes precisely one row in the matrix, and then the result follows.
For all $i, k\in \{1,\ldots , 2^{d-1}\}$ and $j\in \{1,\ldots ,d\}$, we have that
\begin{align}\label{eq:recursively}
R_{i+1,j} -R_{k+1,j} = 2(r_j(i) -R_{k,j})+b_{i,j}-b_{k,j},
\end{align}
 where $t_j := b_{i,j}-b_{k,j}+1\in \{0,1,2\}$. Hence, for each pair of rows $i, k$, we define the ternary vector $\t = b_i - b_k + 1$. Not that, given any ternary vector $\t$, it is easy to find two binary vectors such that their difference +1 is $\t$. Now, for each $\t$, there is an equivalence class of pairs of binary vectors, and it is given by $2^u$, where $u$ is the number of 1s in $\t$. Suppose now that we produce the same ternary vector $\t$ in two different ways, say by finding rows such that
\begin{align}\label{eq:same}
\t = b_i - b_j = b_k - b_\ell.
\end{align}
We must show that the two ways to obtain $\t$ results in the same row in $F$, and to this purpose it suffices to show that the last entries  $R_{i+1, d} - R_{j+1, d}$ and $R_{k+1,d} - R_{\ell + 1, d}$, in the two representations will be the same. Observe that $i-j > 0$ if and only if $k-\ell >0$. This gives that the respective differences in the first colums will be the same. Then, because of (\ref{eq:same}), then by (\ref{eq:recursively}), we get the claim for the last column. Thus, the definition of $\t$ gives a unique row vector in the shortcut matrix, and so the number of rows is correct.
\end{proof}

We note that the construction in Theorem~\ref{thm:playable} suggests a similar definition of the shortcut-matrix.
\begin{definition}\label{def:tree}
Let $d\ge 2$. For each $i\in \{1,\ldots ,2^{d-1}\}$, $j\in \{1,\ldots ,d\}$, we construct a tree-structure of depth $d$, where the root has label $(j, x) = (d, i(2^d-1))$. If $x$ is even, then the node $(j,x)$ has one child, labeled $(j-1,x/2)$, and otherwise it has two children labeled $(j-1, (x-1)/2)$ (to the left) and $(j-1, (x+1)/2)$ (to the right). Let $T^d$ denote the family of all such trees, and let  $T^d(n)$ denote the same family, but where each label $(j, x)$ has been replaced with $2^{j-1}(2^{d}-1)n+x$. \end{definition}

\begin{theorem}
Each path in $T^d(n)$, from a leaf  to the root, represents a unique row in the shortcut matrix.
\end{theorem}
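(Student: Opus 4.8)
The plan is to read off, from each leaf-to-root path, the vector $\x=(x_1,\dots,x_d)$ whose $j$th coordinate is the (shifted) label of the node the path visits at level $j$ (the leaf supplies $x_1$, the root supplies $x_d$), and then to identify this vector as a row of the shortcut matrix by appealing to Theorem~\ref{thm:playable}. Descending one level in $T^d(n)$, the child of a node with (base) label $x$ has label $x/2$ when $x$ is even and one of $(x\mp1)/2$ when $x$ is odd, while the level-$j$ label additionally carries the shift $2^{j-1}(2^d-1)n$. My first step is to verify that the resulting $\x$ satisfies the ternary recurrence of Theorem~\ref{thm:playable}: the root label $(2^d-1)(2^{d-1}n+i)$ is divisible by $2^d-1$, giving $x_d\equiv0\pmod{2^d-1}$, and the branching rule is, by inspection, exactly the statement $x_{j-1}\in\{\lfloor x_j/2\rfloor,\lceil x_j/2\rceil\}$ for $j\in\{2,\dots,d\}$. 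The only care needed is that the parent-level shift $2^{j-1}(2^d-1)n$ is even for $j\ge2$, so it passes through the floor and ceiling unchanged and does not disturb the parity bookkeeping. With the ternary recurrence in hand, Theorem~\ref{thm:playable} certifies that $\x$ is a proper shortcut, hence a row of the shortcut matrix.

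Next I would establish that the correspondence path $\mapsto\x$ is a bijection onto the rows. Injectivity is immediate: two distinct paths first diverge at some level, where they visit nodes carrying different labels, so the associated vectors differ in that coordinate. For surjectivity I would run the previous paragraph in reverse: given any row $\f$ of the shortcut matrix, Theorem~\ref{thm:playable} tells us $\f$ has a ternary recurrence, which says precisely that $f_{j-1}\in\{\lfloor f_j/2\rfloor,\lceil f_j/2\rceil\}$, i.e.\ that $f_{j-1}$ is one of the children of the node labelled $f_j$. Thus $\f$ traces a genuine root-to-leaf path, provided its root $f_d=(2^d-1)m$ is the root of one of the $2^{d-1}$ trees in the family; here the relevant tree is recovered by writing $m=2^{d-1}n+i$ with $i\in\{1,\dots,2^{d-1}\}$, which pins down both the block parameter $n$ and the tree index $i$.

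To conclude that the map lands on all rows exactly once, I would combine injectivity and surjectivity with the count $3^{d-1}$ from Theorem~\ref{thm:shortcut}: the paths inject into the $3^{d-1}$ rows and every row is hit, so the map is a bijection. It is also worth recording the explicit dictionary between a branch choice and the ternary digit of Theorem~\ref{thm:shortcut} -- \emph{floor/left} corresponds to $t_j=0$, \emph{forced/even} to $t_j=1$, \emph{ceiling/right} to $t_j=2$, matching $f_j=2f_{j-1}-t_j+1$ with the recurrence~(\ref{eq:recursively}) -- which makes the identification of paths with the ternary parametrization transparent.

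The step I expect to be the main obstacle is the bookkeeping that aligns the parametrized tree family $T^d(n)$ with the single finite shortcut matrix: the trees branch in a parity-dependent way (even nodes are forced, odd nodes split), so individual trees carry different numbers of leaves, and it is only after summing over the $2^{d-1}$ roots $i(2^d-1)$ and invoking arithmetic periodicity in the last coordinate that the total matches the uniform count $3^{d-1}$. I would handle this by fixing the role of $n$ as the index of a periodic block, checking that the $2^{d-1}$ roots of $T^d(n)$ sweep out exactly one period of admissible values of $f_d$, and then using the periodicity of Lemma~\ref{lem:ratwheel} to see that each path in $T^d(n)$ represents, up to this periodic shift, a unique row of the shortcut matrix.
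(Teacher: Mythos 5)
Your proposal is correct and follows the same route as the paper, whose entire proof is the one-line remark that the claim ``follows by Theorem~\ref{thm:playable}''; you have simply supplied the details that the authors left implicit (checking that the even level shifts $2^{j-1}(2^d-1)n$ commute with the floor and ceiling, the two directions of the correspondence, and the bookkeeping against the $3^{d-1}$ count of Theorem~\ref{thm:shortcut}). No substantive difference in approach, and no gap.
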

\begin{proof}
This follows by Theorem~\ref{thm:playable}.
\end{proof}
We obtain the lexicographic order of the rows in the shortcut matrix by reading the paths left to right and starting with $i=1$, etc.

\subsection{A conjectured algorithm for the last column}\label{sec:lastcol}
The last column of the shortcut matrix satisfies a regular behavior, for increasing dimensions $d$. The number of entries of $k(2^d-1)$, for $k\in \N$, is represented by a sequence of vectors $(\sigma^d)_{d\ge 2}$ of lengths $2^{d-2}$: $$(2), (3,2), (4,3,5,2), (5,4,7,3,8,5,7,2), (6,5,9,4,11,7,10,3,11,8,13,5,12,7,9,2), \ldots $$
	The entries of $\sigma^d$ are defined recursively by $\sigma_1^2 = 2$ and, for $d > 2$, $$\sigma^d_1 = \sigma^{d-1}_1 + 1.$$ For all $1\le j\le 2^d$, $$\sigma^{d+1}_{2j} = \sigma^d_j,$$ and for all $1\le j< 2^d$, $$\sigma^{d+1}_{2j+1} = \sigma^d_j+\sigma^d_{j+1}.$$
	
\section{Rat games are approximately nim}\label{sec:approxnim}
The game of nim is probably the most famous impartial combinatorial game. It has the property that any impartial game $G$ is equivalent to a heap of nim; the size of a nim heap is its nim value (a.k.a Sprague-Grundy value). We say that an impartial heap game $\Gamma$ (on $d$ heaps) is \emph{almost nim} if the total number of objects in the heaps is almost always its nim value. Precisely, let $\gamma(n)$ denote the number of game positions with a total number of $n$ objects, for which the nim-value of $\Gamma$ is not $n$. Then $\Gamma$ is \emph{almost-nim-heap} if $$\lim_{n\rightarrow \infty}\frac{\gamma(n)}{n} = 0.$$
Moreover, if the nim value 0 is the only nim value which differs from the total number of objects in the $d$ heaps, then we call $\Gamma$ an \emph{approximate-nim-heap}.

In Table~\ref{tab:SG2heap}, we give as example the Sprague-Grundy values of the rat game on 2 heaps.
\begin{theorem}
The grandiose rat games are almost-nim-heaps. In fact, they are approximate-nim-heaps.
\end{theorem}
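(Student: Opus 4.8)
The plan is to prove the stronger \emph{approximate-nim-heap} assertion directly and read off the \emph{almost-nim-heap} property as an immediate corollary. Write $|\x| = \sum_i x_i$ for the total number of tokens and define the candidate Sprague--Grundy function $f(\x) = 0$ if $\x\in\R$ and $f(\x) = |\x|$ otherwise. I would verify that $f$ is the genuine Sprague--Grundy function by checking the two mex-characterizing conditions: (II) no option of $\x$ carries the same $f$-value as $\x$, and (I) every value in $\{0,1,\ldots,f(\x)-1\}$ is attained by some option. Once both hold, the uniqueness of the Sprague--Grundy function forces $\mathcal{G}=f$, which is exactly the statement that the only nim-value deviating from the token count is $0$, occurring precisely on the P-positions $\R$.

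Conditions (II) and the easy half of (I) are quick. For (II): if $\x\in\R$ then, by Theorem~\ref{thm:existence}, no option lies in $\R$, so no option has $f$-value $0$; if $\x\notin\R$ then every option $\y$ satisfies $f(\y)\in\{0\}\cup\{|\y|\}$ with $|\y|<|\x|$, hence $f(\y)\ne|\x|$ (note $|\x|>0$, since $\0\in\R$). For the value $0$ in (I) when $\x\notin\R$, Theorem~\ref{thm:existence} guarantees a move from the N-position $\x$ to some $\r\in\R$, which has $f$-value $0$. Finally, at most one rat-vector sits at each total: by Lemma~\ref{lem:Delta}, $|\r(n+1)|-|\r(n)| = \sum_i\Delta_i(n+1)\ge\sum_i(2^i-1)>0$, so $n\mapsto|\r(n)|$ is strictly increasing. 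The almost-nim-heap conclusion then follows from the approximate one, because $\gamma(n)$ counts exactly the P-positions of total $n$, of which there is at most one, so $\gamma(n)/n\le 1/n\to 0$.

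The real content is the covering step: for an N-position $\x$ with $|\x|=N$ and each target total $u\in\{1,\ldots,N-1\}$, produce a legal move $\s\in\M$ with $|\s|=N-u$ landing on an N-position $\y=\x-\s$ of total $u$, so that $f(\y)=|\y|=u$. I would control the two obstructions among the distribution set $S = \{\s : \0\le\s\le\x,\ |\s|=N-u\}$: the moves that are illegal ($\s\in\R-\R$) and the moves that land on a P-position ($\x-\s\in\R$). The landing obstruction costs at most one $\s$, by the uniqueness of rat-vectors per total just noted. For the legality obstruction, the ternary-recurrence description (Theorem~\ref{thm:playable}) together with the finite shortcut matrix (Theorem~\ref{thm:shortcut}) pins, for a fixed total, the last coordinate $s_d$ to an $O(d)$-wide window of multiples of $2^d-1$ and then allows at most $2^{d-1}$ ternary completions; adding the at most one rat-vector of that total, the illegal moves of a fixed total number at most a constant $C_d$ depending on $d$ alone. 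Whenever $|S| > C_d + 1$ the two obstruction sets cannot exhaust $S$, and a good move exists.

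The main obstacle is the thin/extreme regime where $|S|\le C_d+1$, which forces either $u$ or $N-u$ small, or $\x$ concentrated on few coordinates, so the crude count fails. Here I would argue by explicit moves rather than counting, exploiting that every nonzero vector with a zero coordinate lies outside $\R$ (rat-vectors have all coordinates positive for $n\ge 1$, by (\ref{eq:rat})), so single-coordinate reductions automatically land on N-positions. Concretely, for small $u$ one removes tokens down to a near-unit target, and for small $N-u$ one removes a single-coordinate chunk; one checks that such $\s$ escape both the congruence $s_d\equiv 0,\,2^{d-1}\pmod{2^d-1}$ and the ternary recurrence except in degenerate cases, and that the few remaining candidate coordinates cannot all be blocked at once. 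This last point is a rigidity fact of the same flavor as the $d=2$ computation, where demanding that both $\x-e_1$ and $\x-e_2$ be shortcuts forces incompatible constraints on $x_d\bmod(2^d-1)$ and on the ternary recurrence. Making this rigidity argument uniform in $d$, and matching it cleanly to the counting regime so that \emph{every} $u$ is covered, is the part I expect to require the most care.
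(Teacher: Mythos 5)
Your framework is the right one, and it already goes well beyond the paper's own proof, which is two sentences long and contains no argument for the substantive claim --- it simply asserts that ``using the methods in this paper, one can verify'' that every position outside $\R$ has nim-value equal to its coordinate sum. The parts you do carry out are correct: defining $f(\x)=0$ on $\R$ and $f(\x)=|\x|$ elsewhere and checking the two mex conditions is exactly how such a statement must be proved; condition (II) follows from Theorem~\ref{thm:existence} together with $|\y|<|\x|$ for every option; the value $0$ in condition (I) is again Theorem~\ref{thm:existence}; and your observation that at most one rat vector has any given coordinate sum (each $\Delta_j(n)>0$ by Lemma~\ref{lem:Delta}) is what makes the almost-nim-heap claim follow from the approximate-nim-heap one.

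The genuine gap is the one you name yourself: the covering step, that from every $\x\notin\R$ with $|\x|=N$ and every $u\in\{1,\ldots,N-1\}$ there is a legal move landing on a position of total $u$ outside $\R$. Your counting bound (at most $C_d$ proper shortcuts and at most one $\R$-landing per fixed move total, versus the size of the slice $S$) is sound in the generic regime, but the degenerate regime is not a marginal corner case: for every position the slices with $N-u$ small have at most $d$ elements, and they can have exactly one element when $\x$ is concentrated on a single coordinate, so the explicit/rigidity half of the argument carries real weight and is only gestured at. A fact worth isolating, which would do much of that work, is that every proper shortcut has all coordinates strictly positive: by Theorem~\ref{thm:playable} its last coordinate is at least $2^d-1\ge 3$, and the ternary recurrence $s_{i-1}\ge\lfloor s_i/2\rfloor$ propagates positivity down to $s_1$; hence every single-coordinate reduction $c e_i$ is a legal move, and these alone realize all landing totals in $\{N-\max_i x_i,\ldots,N-1\}$ up to at most one excluded landing per direction. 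But stitching this to the counting regime so that \emph{every} intermediate $u$ is covered, uniformly in $d$, is still missing; as written, the proposal establishes the correct architecture of the proof (which the paper does not supply either) but does not yet prove the theorem.
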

\begin{proof}
By construction, the rat vectors have nim value 0. Using the methods in this paper, one can verify that the other heap postions have nim values equal to the total number of objects in the heaps, respectively.
\end{proof}

\begin{table}[ht!]
\centering
\resizebox{.4\textwidth}{!}{%
\begin{tabular}{lllllll}
0 & 1 & 2  & 3  & 4  & 5  & 6  \\
1 & 2 & 3  & 4  & 5  & 6  & 7  \\
2 & 0 & 4  & 5  & 6  & 7  & 8  \\
3 & 4 & 5  & 6  & 7  & 8  & 9  \\
4 & 5 & 6  & 7  & 8  & 9  & 10 \\
5 & 6 & 7  & 0  & 9  & 10 & 11 \\
6 & 7 & 8  & 9  & 10 & 11 & 12 \\
7 & 8 & 9  & 10 & 11 & 12 & 13 \\
8 & 9 & 10 & 11 & 0  & 13 & 14
\end{tabular}%
}
\caption{The Sprague-Grundy values for the rat game on 2 heaps. The North-East corner is the terminal position $(0,0)$.}
\label{tab:SG2heap}
\end{table}

That is, if you play a grandiose rat game in disjunctive sum with another game, then you can play approximately as if the game were nim, just keep in mind the exception that the rat vectors have value zero.\footnote{Note that Singmaster's result \cite{Sing}, that almost no positions in an impartial game are P-positions, implies that any approximate nim-heap is also almost-heap.} This result obviously also holds for the succinct rat games, since we proved that the moves are the same. %This result obviously also holds for the succinct rat games, since all moves are the same; in Section~\ref{sec:playable}, we showed only P-equivalence, but in fact they are equivalent (with respect to nim-values).

\begin{example}
Let $\mathcal G=(1,4,5)_\text{\sc nim}$ be a game of {\sc nim}, and let $\mathcal H=(1,4,5)_\text{\sc rat}$ be a {\sc rat}-game. In the game $\mathcal G + \mathcal H$, a winning move is to $\mathcal G + (1,2,4)_\text{\sc rat}$, since the nim-value of each component game is $0$, and since $\mathcal H - (1,2,4)_\text{\sc rat} = (0,2,1)_\text{\sc rat}\not \in \R$. 

Let $\mathcal G=(1,2,5,8)_\text{\sc nim}$ be a game of {\sc nim}, and let $\mathcal H=(3,4,5,6)_\text{\sc rat}$ be a {\sc rat}-game. In the game $\mathcal G + \mathcal H$, a winning move is to $\mathcal G + (3,0,5,6)_\text{\sc rat}$, since the nim-value of each component game is $14$.

Let $\mathcal G=(1,2,5,8)_\text{\sc nim}$ be a game of {\sc nim}, and let $\mathcal H=(11,21,42,83)_\text{\sc rat}$ be a {\sc rat}-game. In the game $\mathcal G + \mathcal H$, a winning move is to $(1,2,5,6)_\text{\sc nim} + (11,21,42,83)_\text{\sc rat}$, since the nim-value of each component game is $0$. Indeed, $83\equiv 8 \pmod {2^4-1}$, and the recursive word is binary, namely $b_5=101$. 
\end{example}

\section{The rats' right shifts}\label{sec:rightshift}
%In binary representation, the multiples of $2^d-1$ are all numbers of the form $k\times 11\ldots 1$, i.e.
%$3,6,9,12,15,18,21,24, \ldots $, $11, 110, 1001, 1100, 1111,  10010, 10101, 11000, 11011, 11101, 1000001, $ Multiplying a binary number with 2 is a left shift of the number (and adjoining a 0 to the left). 

For $x = x_n\cdots x_0$ a nonnegative integer coded in binary (i.e. $x=\sum 2^ix_i$, $i\in \{0,\ldots , n\}$), let $\varphi(x) = x_n\cdots x_1$ be the binary digits right shift of $x$, where the rightmost digit, $x_0$, has been dropped  (i.e. $\varphi(x) = \sum 2^{i-1}x_i$, $i\in \{1,\ldots , n\}$). Note that $\varphi (x) = \lfloor x/2 \rfloor$. Let $\xi (x) = x_0+1\pmod 2$ be the binary complement to the dropped digit. We have the following result: 

\begin{prop}
%Let $\y$ = \x \pmod{2^d - 1}$. 
Coded in binary, let $\alpha = \xi(x_2+1)\cdots \xi(x_d+1)$ (i.e. $\alpha = \sum 2^{i-2}\xi(x_i+1)$, $i\in \{2,\ldots , d\}$). The vector $\x = (x_1, \ldots , x_d)\in \R$ if and only if $\xi(x_i + 1)\in \{0, 1\}$ and $\varphi(x_i + 1) = x_{i - 1}$, for all $i$, and $\alpha(2^d-1) \equiv  x_d - 2^{d-1} \pmod {2^{d-1}(2^d-1)}$. 
\end{prop}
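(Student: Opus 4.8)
The plan is to translate everything into the binary-matrix language of Theorem~\ref{thm:binrat} and then reduce the three conditions to the single defining recurrence $R_{i,j}(t)=2R_{i,j-1}(t)-b_{i,d-j}$. The bridge I would establish first is an elementary parity fact: for non-negative integers $w$ and $x$, one has $\varphi(x+1)=\lf\frac{x+1}{2}\rf=w$ if and only if $x=2w-b$ for some $b\in\{0,1\}$, and in that case $\xi(x+1)=b$ (indeed $b=0$ exactly when $x$ is even, and $\xi(x+1)$ is the complement of the last bit of $x+1$). Thus the condition $\varphi(x_i+1)=x_{i-1}$ for all $i$ says precisely that $x_i=2x_{i-1}-b_i$ with $b_i:=\xi(x_i+1)\in\{0,1\}$, which is exactly the column recurrence of the rat-matrix with the dropped bit playing the role of $b_{i,d-j}$. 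This immediately accounts for the condition $\xi(x_i+1)\in\{0,1\}$ (the outputs of $\xi$ are bits, so $\alpha$ is a well-defined $(d-1)$-digit binary number, hence $0\le\alpha<2^{d-1}$).

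For the ``only if'' direction I would take $\x=\r(n)$ and write $n-1=2^{d-1}t+i$ with $0\le i<2^{d-1}$, so that $\x=R_i(t)$ by Theorem~\ref{thm:binrat}. Unrolling the column recurrence telescopes to $x_d=2^{d-1}R_{i,1}(t)-\sum_{j=2}^{d}2^{d-j}b_{i,d-j}$, and since $\sum_{k=0}^{d-2}2^kb_{i,k}=i$ and $R_{i,1}(t)=(2^d-1)t+2i+1$, this simplifies to the closed form $x_d-2^{d-1}=(2^d-1)(2^{d-1}t+i)$. Reading the dropped bits back out via the parity fact shows $\xi(x_j+1)=b_{i,d-j}$, whence $\alpha=\sum_{j=2}^{d}2^{d-j}\xi(x_j+1)=\sum_{k=0}^{d-2}2^kb_{i,k}=i$; reducing the closed form modulo $2^{d-1}(2^d-1)$ (where the term $(2^d-1)2^{d-1}t$ vanishes) then yields exactly $\alpha(2^d-1)\equiv x_d-2^{d-1}$. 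The key bookkeeping point, which I would state explicitly, is that the left-to-right binary reading of $\alpha$ recovers the row index $i$ itself.

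For the ``if'' direction, suppose $\x\in\N_0^{d}$ satisfies the three conditions. Condition two and the parity fact let me define $\alpha:=\sum_{j=2}^d 2^{d-j}\xi(x_j+1)\in\{0,\dots,2^{d-1}-1\}$, and condition three writes $x_d=2^{d-1}+(2^d-1)\alpha+2^{d-1}(2^d-1)t$ for a unique integer $t$; since $0\le\alpha\le 2^{d-1}-1$ and $x_d\ge 0$, the smallest non-negative representative forces $t\ge 0$, so $x_d=R_{\alpha,d}(t)$. Running the recurrence $x_{j-1}=\varphi(x_j+1)$ downward from $x_d$ then reconstructs $x_{d-1},\dots,x_1$ as $R_{\alpha,j}(t)$, so $\x=R_\alpha(t)=\r(n)$ with $n=2^{d-1}t+\alpha+1$, proving $\x\in\R$. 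I expect the main obstacle to be precisely this non-negativity step guaranteeing $t\ge 0$, together with the dual bit-order bookkeeping that $\alpha$ equals the row index rather than its reversal; a minor separate point is the degenerate vector $\0\in\R$, for which $x_d=0$ fails the congruence, so the characterization is really for the genuine rat-vectors $\r(n)$, $n\in\N$, with $\0$ dispatched by inspection.
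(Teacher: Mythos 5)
Your proof is correct and follows the same route the paper intends: the paper's own ``proof'' is a one-line appeal to Theorem~\ref{thm:binrat} and Lemma~\ref{lem:ratwheel}, and your argument is precisely the detailed unwinding of that reformulation, matching $\varphi(x_j+1)=x_{j-1}$ with the column recurrence $R_{i,j}(t)=2R_{i,j-1}(t)-b_{i,d-j}$ and telescoping to $x_d-2^{d-1}=(2^d-1)(2^{d-1}t+i)$. Your two side remarks are also well taken and worth recording: $\alpha$ must be read with $\xi(x_2+1)$ as the most significant bit so that $\alpha$ equals the row index $i$ (the string notation, not the parenthetical formula $\sum 2^{i-2}\xi(x_i+1)$, gives the right order), and $\0\in\R$ genuinely fails the congruence, so the statement as written characterizes only $\R\setminus\{\0\}$.
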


\begin{proof}
This is just a reformulation of previous results, in particular Lemma~\ref{lem:ratwheel} and Theorem~\ref{thm:binrat}.
\end{proof}

A similar, but weaker result can be obtained for the shortcut matrix, but we omit it here, since the nice correspondence with the row numbers does not hold any more. Instead the characterization depends on understanding the general line fractals displayed in the Appendix. %of the arXiv manuscript \cite{RatData}.
%\section{A right shift property}
%From position $\x$, the 
%\iffalse
\section*{Appendix}%\label{sec:appendix}
Beginning with $d=2$, we have the standard form $(\rat{3n}{2}, 3n-1)$, and for $n\in \mathbb N_0$, the mouse-matrix gives all non-zero P-positions. (Here the variable $n$ has different interpretations in the standard forms and the matrices.)
\begin{equation*}
R_2=\left(
\begin{matrix}
3n+1 & 6n+2 \\
3n+3 & 6n+5
\end{matrix}
\right) \qquad
\end{equation*}
The mouse's shortcut-matrix consists of all vector differences of $R_2$.
\begin{equation*}
F_2 =\left(
\begin{matrix}
3n & 6n \\
3n+1 & 6n+3 \\
3n+2 & 6n+3\\
\end{matrix}
\right) \qquad
\end{equation*}
The standard form for $d=3$ is  $(\rat{7n}{4}, \rat{7n}{2}-1, 7n-3)$.
The rat-matrix is
\begin{equation*}
R_3=\left(
\begin{matrix}
7n+1 & 14n+2 & 28n+4 \\
7n+3 & 14n+6 & 28n+11 \\
7n+5 & 14n+9 & 28n+18 \\
7n+7 & 14n+13 & 28n+25
\end{matrix}
\right)\qquad
\end{equation*}

and its shortcut-matrix is
\begin{equation*}
F_3 = \left(
\begin{matrix}
7n & 14n & 28n \\
7n+1 & 14n+3 & 28n+7 \\
7n+2 & 14n+3 & 28n+7 \\
7n+2 & 14n+4 & 28n+7\\
7n+3 & 14n+7 & 28n+14 \\
7n+4 & 14n+7 & 28n+14 \\
7n+5 & 14n+10 & 28n+21\\
7n+5 & 14n+11 & 28n+21 \\
7n+6 & 14n+11 & 28n+21
\end{matrix}
\right) \qquad
\end{equation*}
In the ternary recurrence we use a recursive division by 2, beginning with the last column, and note if the result is exact (1), smaller (0) or larger (2); here indicated in a `difference'  matrix accompanying $F_3$:

\begin{equation*}
\left(
\begin{matrix}

2 & 2  \\
0 & 2  \\
1 & 0 \\
2 & 1 \\
0 & 1  \\
1 & 2 \\
2 & 0  \\
0 & 0

\end{matrix}
\right) \qquad
\end{equation*}

Interpreting these numbers in ternary and noting the differences between the consecutive rows produces pictures in the Appendix (for $d=2,\ldots 10$). 

The standard form for the fat rat is
$$\r_4=\left(\left\lfloor\frac{15}{8}n\right\rfloor, \left\lfloor\frac{15}{4}n\right\rfloor-1, \left\lfloor\frac{15}{2}n\right\rfloor-3, 15n-5\right)$$
with matrix
\begin{equation*}
R_4 = \left(
\begin{matrix}
15n+1 & 30n+2 & 60n+4 & 120n+8\\
15n+3 & 30n+6 & 60n+12 &120n+23\\
15n+5 & 30n+10 & 60n+19 &120n+38\\
15n+7 & 30n+14 & 60n+27&120n+53\\
15n+9 & 30n+17 & 60n+34 &120n+68\\
15n+11 & 30n+21 & 60n+42 &120n+83\\
15n+13 & 30n+25 & 60n+49 &120n+98\\
15n+15 & 30n+29 & 60n+57 & 120n+113
\end{matrix}
\right)\qquad
\end{equation*}

\begin{equation*}
F_4 = \left(
\begin{matrix}
15n & 30n & 60n &120n\\
15n+1 & 30n+3 & 60n+7 &120n+15\\
15n+2 & 30n+3 & 60n+7 & 120n+15\\
15n+2 & 30n+4 & 60n+7 & 120n+15\\
15n+2 & 30n+4 & 60n+8 & 120n+15\\
15n+3 & 30n+7 & 60n+15 &120n+30\\
15n+4 & 30n+7 & 60n+15 & 120n+30\\
15n+4 & 30n+8 & 60n+15 &120n+30\\
15n+5 & 30n+11 & 60n+22 &120n+45\\
15n+6 & 30n+11 & 60n+22 & 120n+45\\
15n+5 & 30n+11 & 60n+23 &120n+45\\
15n+6 & 30n+11 & 60n+23 &120n+45\\
15n+6 & 30n+12 & 60n+23 &120n+45\\
15n+7 & 30n+15 & 60n+30 &120n+60\\
15n+8 & 30n+15 & 60n+30 & 120n+60\\
15n+9 & 30n+18 & 60n+37 &120n+75\\
15n+9 & 30n+19 & 60n+37 &120n+75\\
15n+10 & 30n+19 & 60n+37 &120n+75\\
15n+9 & 30n+19 & 60n+38 & 120n+75\\
15n+10 & 30n+19 & 60n+38 &120n+75\\
15n+11 & 30n+22 & 60n+45 & 120n+90\\
15n+11 & 30n+23 & 60n+45 & 120n+90\\
15n+12 & 30n+23 & 60n+45 &120n+90\\
15n+13 & 30n+26 & 60n+52 & 120n+105\\
15n+13 & 30n+26 & 60n+53 & 120n+105\\
15n+13 & 30n+27 & 60n+53 & 120n+105\\
15n+14 & 30n+27 & 60n+53 &120n+105
\end{matrix}
\right) \qquad
\end{equation*}
Here we illustrate the `difference matrix' leading up to the ternary recurrence, case $d=4$:
\begin{equation*}
\left(
\begin{matrix}
2 & 2 & 2 \\
0 & 2& 2 \\
1 & 0 & 2 \\
1 & 1 & 0 \\
2 & 2 & 1 \\
0 & 2 & 1 \\
1 & 0 & 1 \\
2 & 1 & 2 \\
0 & 1 & 2 \\
2 & 2 & 0 \\
0 & 2 & 0 \\
1 & 0 & 0 \\
2 & 1 & 1 \\
0 & 1 & 1 \\
1 & 2 & 2 \\
2 & 0 & 2 \\
0 & 0 & 2 \\
2 & 1 & 0 \\
0 & 1 & 0 \\
1 & 2 & 1 \\
2 & 0 & 1 \\
0 & 0 & 1 \\
1 & 1 & 2 \\
1 & 2 & 0 \\
2 & 0 & 0 \\
0 & 0 & 0 \\
\end{matrix}
\right) \qquad
\end{equation*}

\subsection*{Fractals in the shortcuts}%\label{sec:anarchy}
We have showed that games for grand rats are well behaved, in fact, they are playable. There is a way to capture the full behavior of the associated ternary matrices as $d$ grows. When we code the rows lexicographically then the ternary recurrence satisfies a 2-dimensional Cantor-like line-fractal, shown in some pictures below for small $d$. The full characterization of these matrices can be recursively defined, without mention of their definition via the short-cut matrices. In Section~\ref{sec:lastcol}, we included a conjectured formula for the last column of the shortcut matrix. At the end of this Appendix, we include some data for the respective `base lines' in the pictures. The remaining line-fractals adapt this patterns, translated to various (Cantor-like) positions and lengths. The penultimate base-line consists of two copies the baseline of the previous picture. The top line appears only from $d\ge 4$ is a linear translation of the penultimate line from the previous picture. The penultimate top line has a somewhat similar characteristicts as the penultimate bottom line; it consists of two copies of the previous penultimate topline. The uppermost points in the cases $d=2,3$ are in fact the penultimate topline (in this sense). The rest of the pictures consists of copies of the line fractals of previous pictures. The detailed description of this is fairly technical, so we omit it in this study. When completed, we also aim for a (computer aided) proof of consistency of the recursive construction with the definition of the ternary matrixes (using also the conjectured recurrence of the last column). 

\begin{figure}[ht!]
  \centering
   \caption{Shortcut differences for $d=2,3,4,5,6$ respectively.}\vspace{.5 cm}
     \includegraphics[width=.45\textwidth]{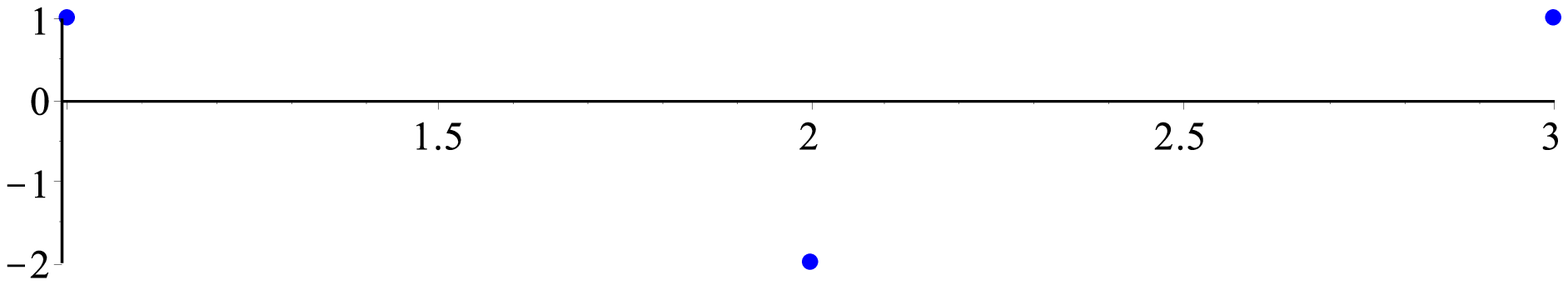}\hspace{1 cm}
     \includegraphics[width=.45\textwidth]{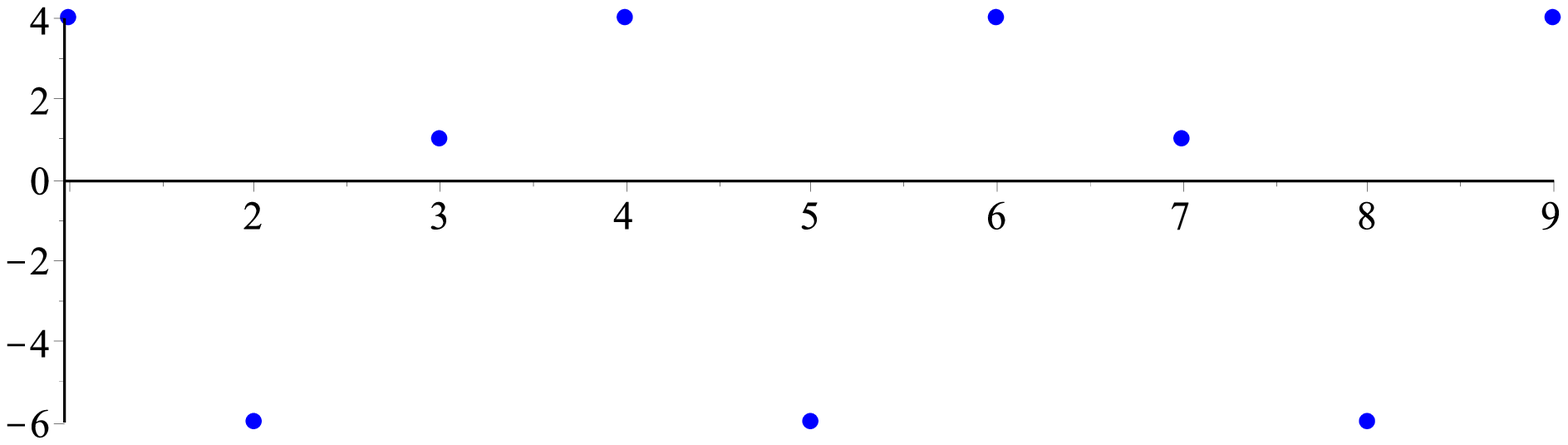}\vspace{1 cm}

    \includegraphics[width=.45\textwidth]{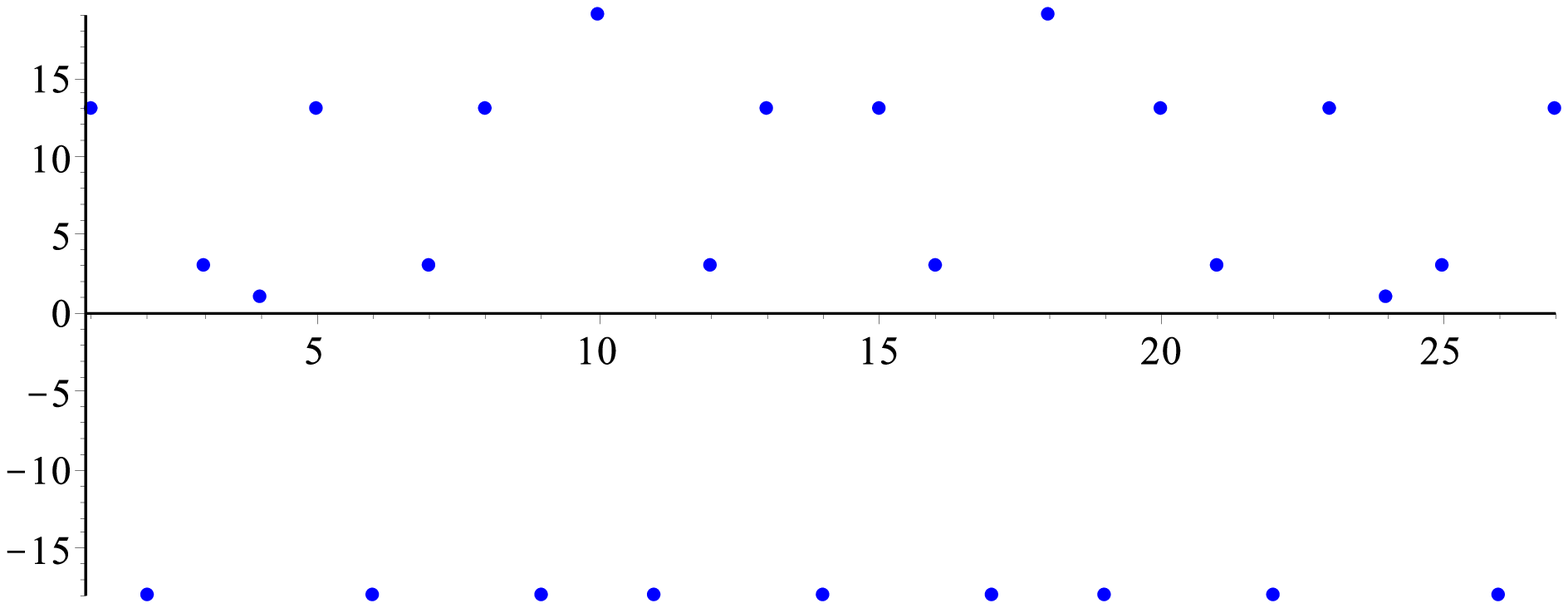}\hspace{1 cm}
     \includegraphics[width=.45\textwidth]{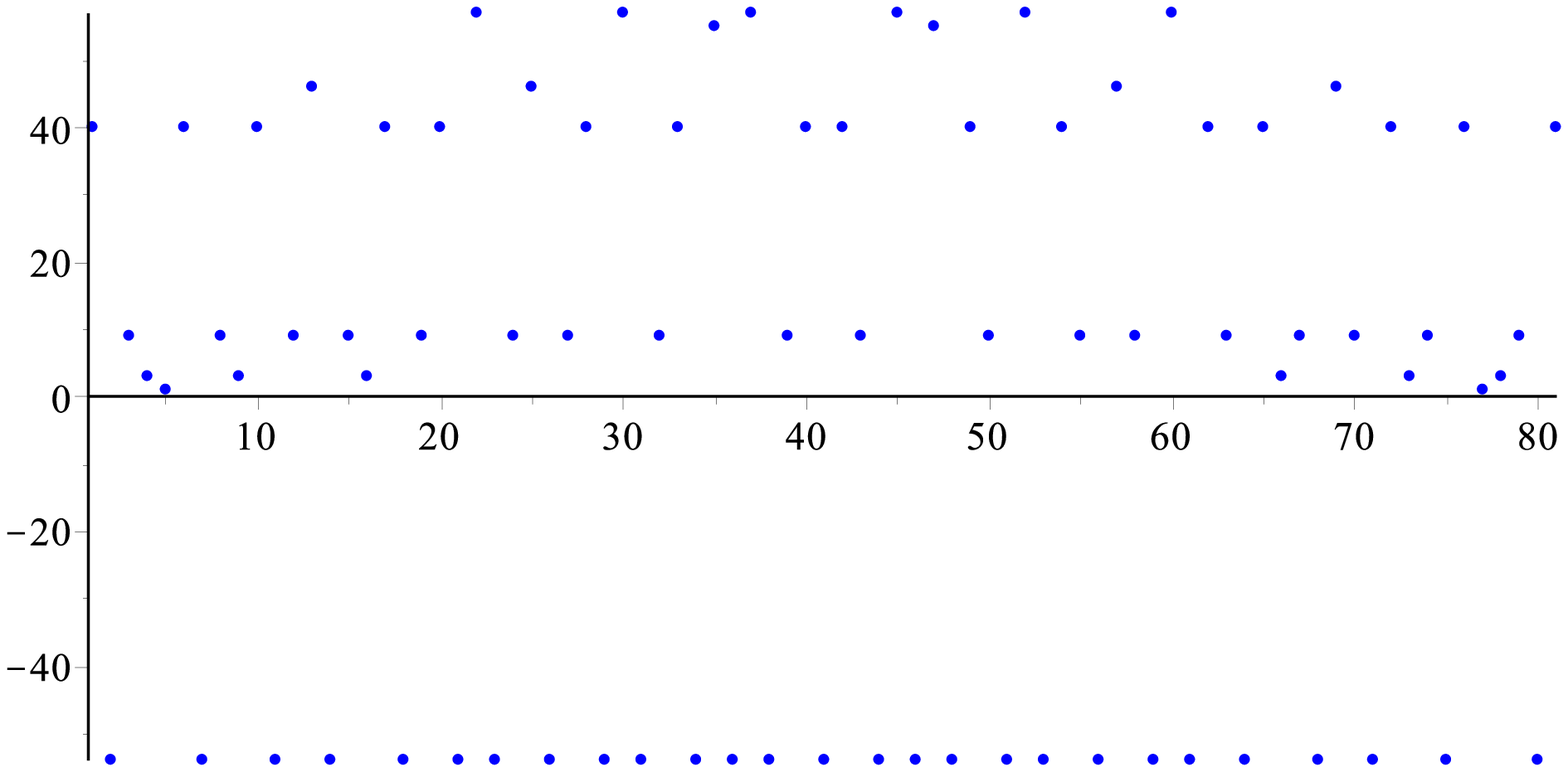}\vspace{1.2 cm}
      
     \includegraphics[width=.8\textwidth]{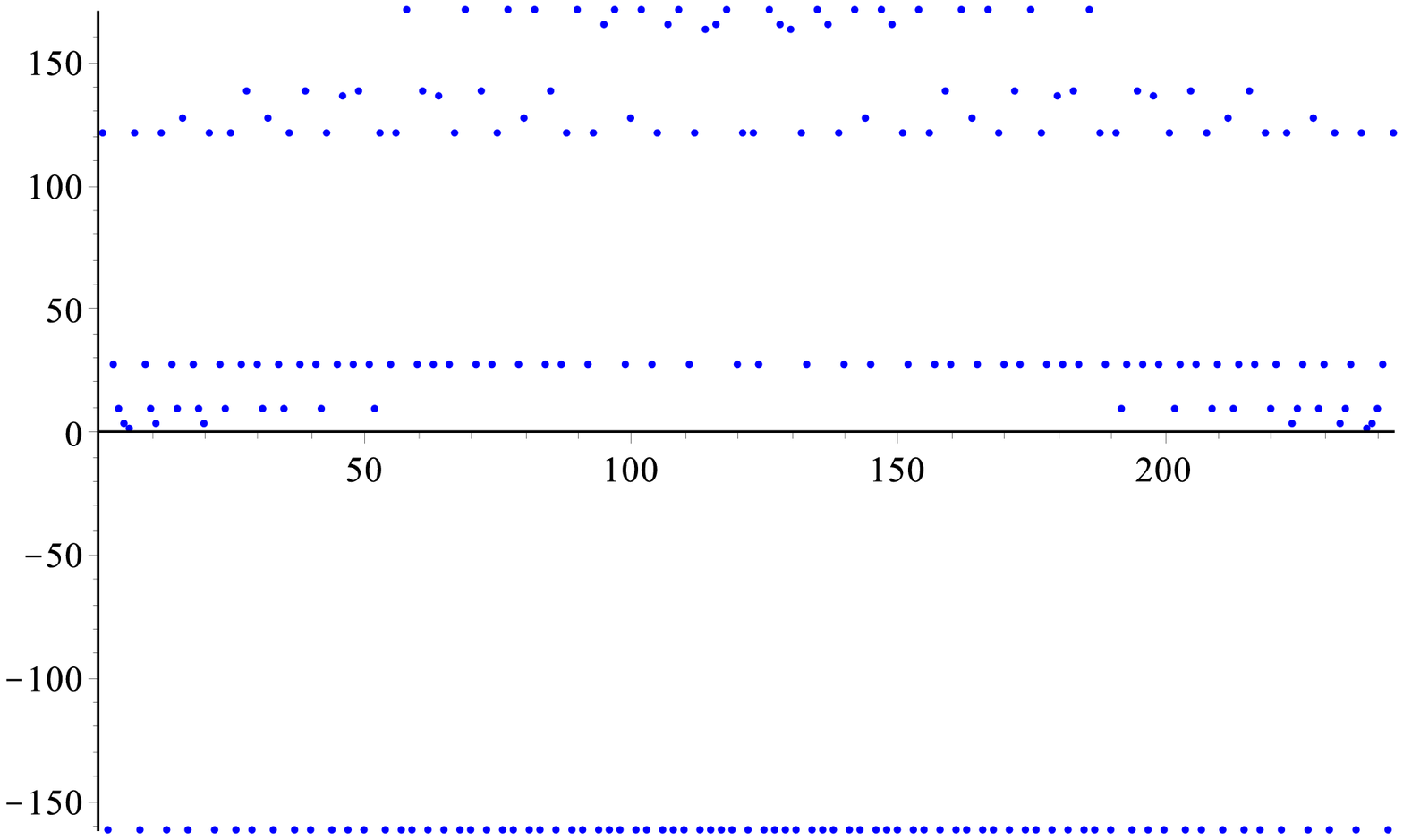}
 \end{figure}
\begin{figure}[ht!]
  \centering
    \caption{Shortcut differences for $d = 7,8$, for values $\ge 0$.}\vspace{.5 cm}
        
    \includegraphics[width=.85\textwidth]{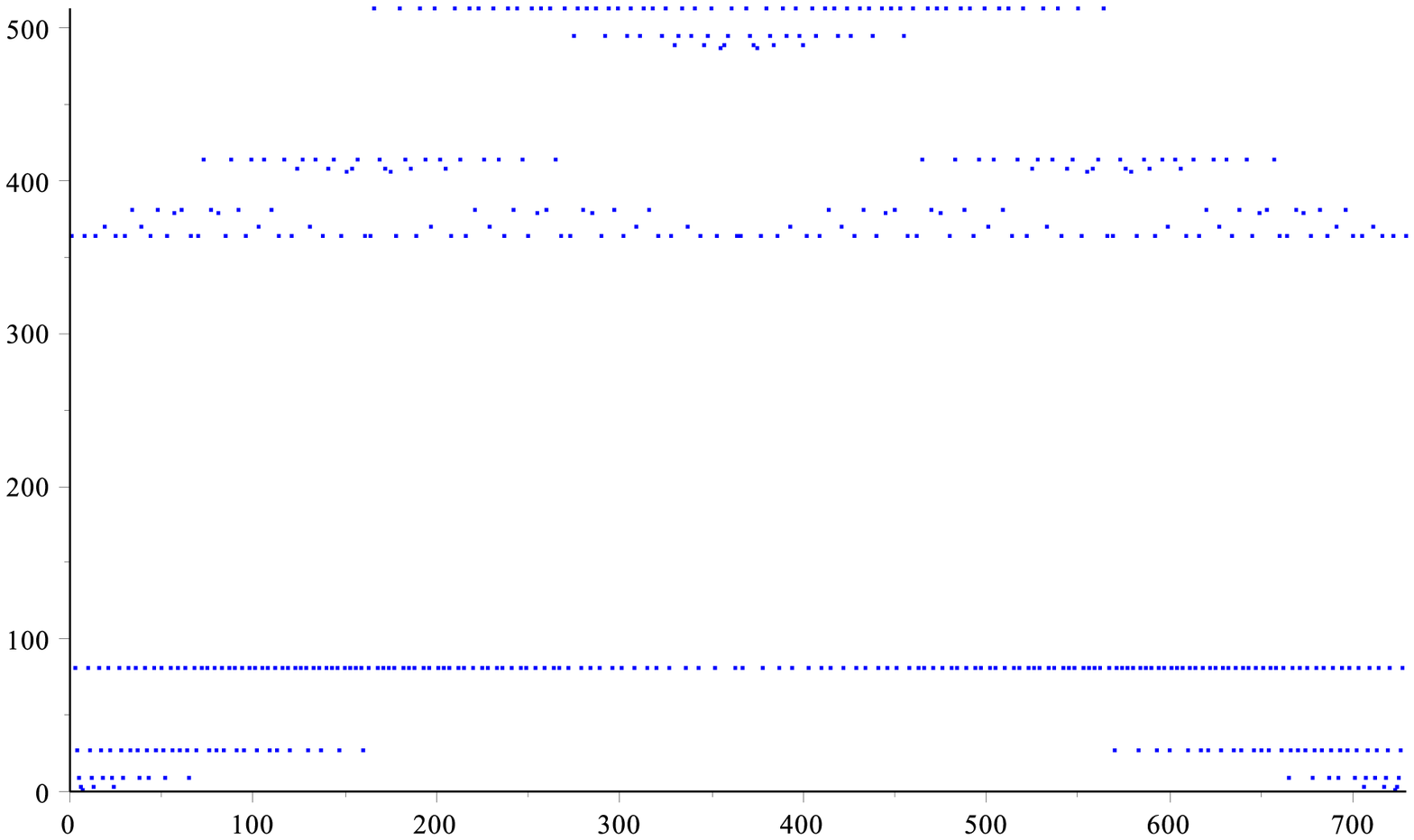}\vspace{10 mm}
    
    \includegraphics[width=.85\textwidth]{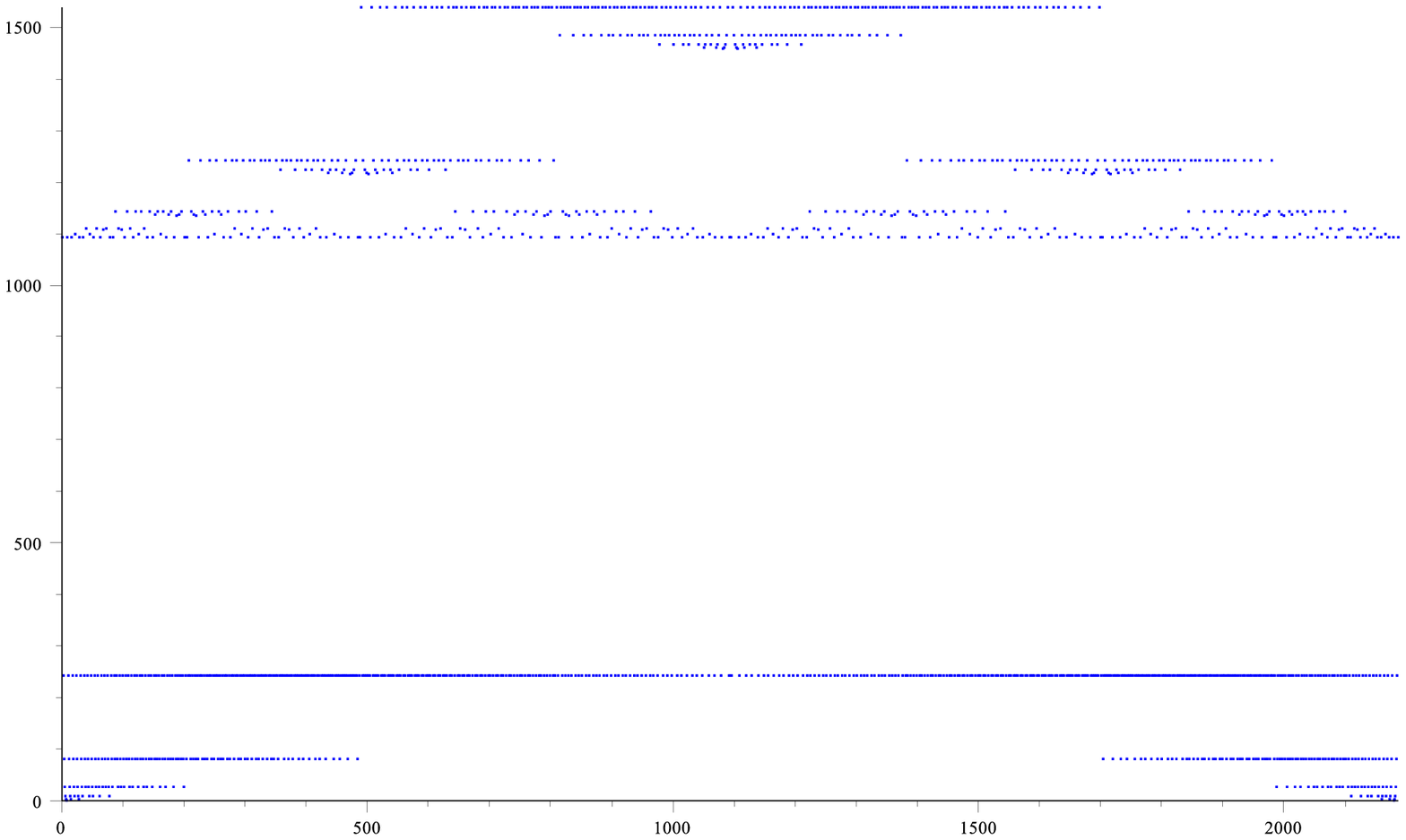}\
    \end{figure}
\begin{figure}[ht!]
  \centering
            \caption{Shortcut differences for $d=9,10$, for values $\ge 0$.}\vspace{7 mm}
    \includegraphics[width=.8\textwidth]{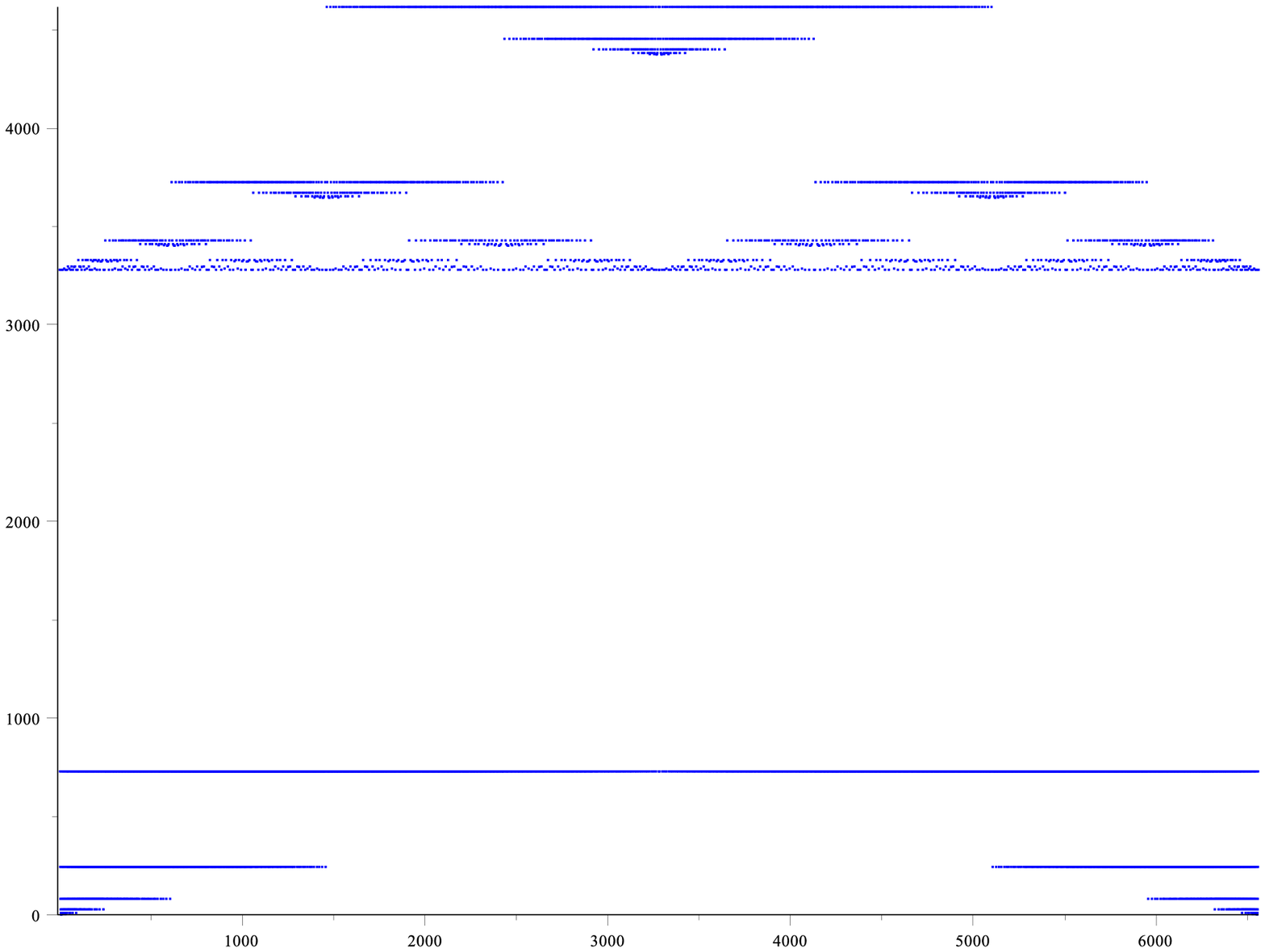}\vspace{10 mm}
    
       \includegraphics[width=.8\textwidth]{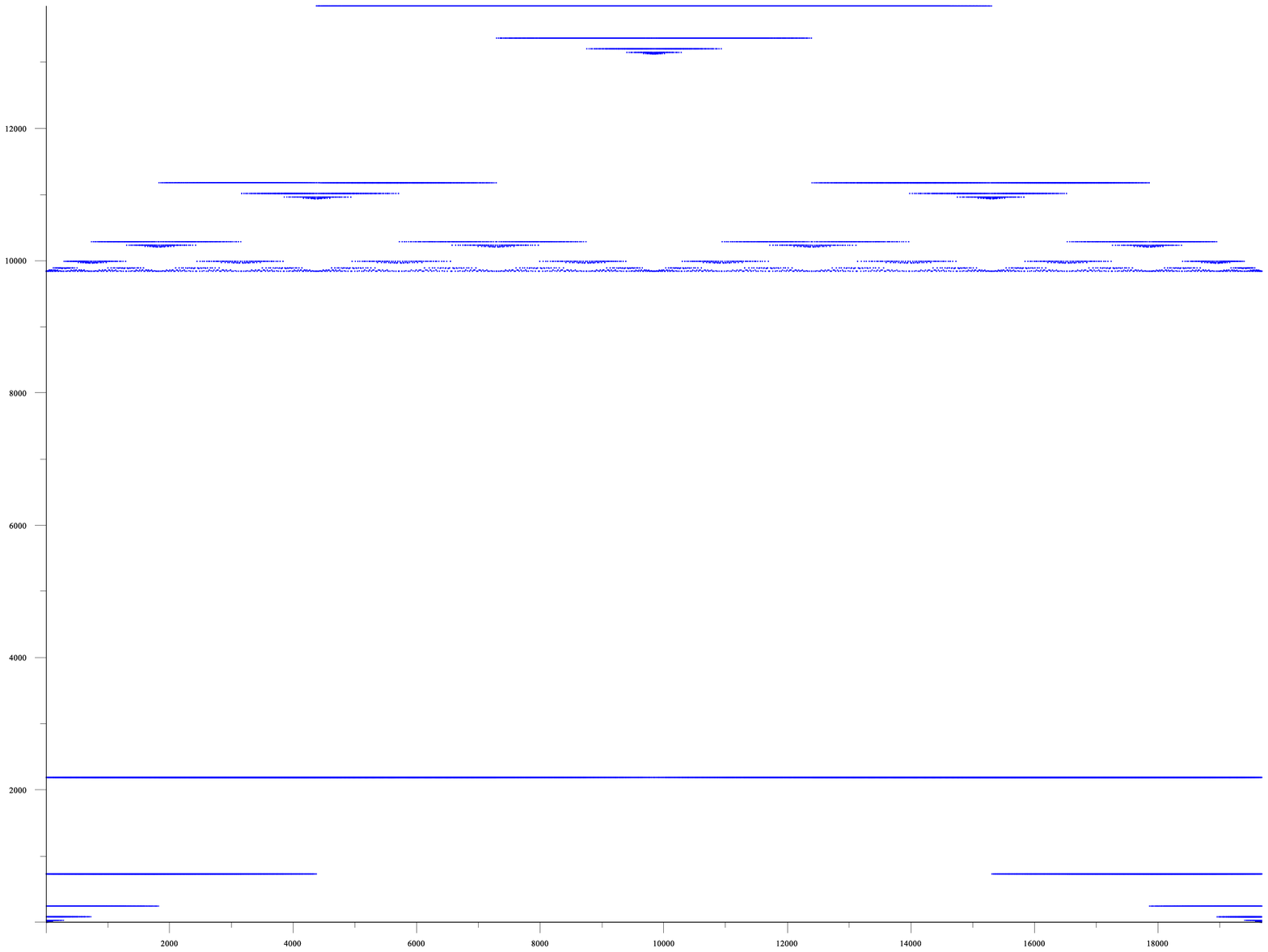}
\end{figure}

\clearpage
The sets of iterated differences of the shortcut matrix, using ternary recurrence, decoded in base 3 expansion ($-1\rightarrow 0, 0\rightarrow 1, 1\rightarrow 2$), are [in brackets the corresponding number of occurences of the numbers]:\\

$d=2:$ -2,1: [1, 2]\\

$d=3:$ -6,1,4: [3, 2, 4]\\

$d=4:$ -18, 1, 3, 13, 19: [9, 2, 6, 8, 2]\\

$d=5:$ -54, 1, 3, 9, 40, 46, 55, 57: [27, 2, 6, 18, 16, 4, 2, 6]\\

$d=6:$ -162, 1, 3, 9, 27, 121, 127, 136, 138, 163, 165, 171: [81, 2, 6, 18, 54, 32, 8, 4, 12, 2, 6, 18]\\

$d=7:$ -486, 1, 3, 9, 27, 81, 364, 370, 379, 381, 406, 408, 414, 487, 489, 495, 513: [243, 2, 6, 18, 54, 162, 64, 16, 8, 24, 4, 12, 36, 2, 6, 18, 54]\\

$d=8:$-1458, 1, 3, 9, 27, 81, 243, 1093, 1099, 1108, 1110, 1135, 1137, 1143, 1216, 1218, 1224, 1242, 1459, 1461, 1467, 1485, 1539: [729, 2, 6, 18, 54, 162, 486, 128, 32, 16, 48, 8, 24, 72, 4, 12, 36, 108, 2, 6, 18, 54, 162]\\

$d=9$:
-4374, 1, 3, 9, 27, 81, 243, 729, 3280, 3286, 3295, 3297, 3322, 3324, 3330, 3403, 3405, 3411, 3429, 3646, 3648, 3654, 3672, 3726, 4375, 4377, 4383, 4401, 4455, 4617: [2187, 2, 6, 18, 54, 162, 486, 1458, 256, 64, 32, 96, 16, 48, 144, 8, 24, 72, 216, 4, 12, 36, 108, 324, 2, 6, 18, 54, 162, 486]\\

$d=10$: -13122, 1, 3, 9, 27, 81, 243, 729, 2187, 9841, 9847, 9856, 9858, 9883, 9885, 9891, 9964, 9966, 9972, 9990, 10207, 10209, 10215, 10233, 10287, 10936, 10938, 10944, 10962, 11016, 11178, 13123, 13125, 13131, 13149, 13203, 13365, 13851: [6561, 2, 6, 18, 54, 162, 486, 1458, 4374, 512, 128, 64, 192, 32, 96, 288, 16, 48, 144, 432, 8, 24, 72, 216, 648, 4, 12, 36, 108, 324, 972, 2, 6, 18, 54, 162, 486, 1458]\\

$d=11$: -39366, 1, 3, 9, 27, 81, 243, 729, 2187, 6561, 29524, 29530, 29539, 29541, 29566, 29568, 29574, 29647, 29649, 29655, 29673, 29890, 29892, 29898, 29916, 29970, 30619, 30621, 30627, 30645, 30699, 30861, 32806, 32808, 32814, 32832, 32886, 33048, 33534, 39367, 39369, 39375, 39393, 39447, 39609, 40095, 41553\\

$d=12$: -118098 (baseline), 1, 3, 9, 27, 81, 243, 729, 2187, 6561(pb), 19683(pt),$\star 88573$, @88579, @88588, 88590, @88615, 88617, 88623, @88696, 88698, 88704, 88722, @88939, 88941, 88947, 88965, 89019, @89668, 89670, 89676, 89694, 89748, 89910, @91855, 91857, 91863, 91881, 91935, 92097, 92583, @ 98416, 98418, 98424, 98442, 98496, 98658, 99144, 100602, @118099, 118101, 118107, 118125, 118179, 118341, 118827, 120285, 124659\\

Observations: The number of elements in the sets of differences are $2,3,5,8,12,17,23,30,38,47,57...$, which is $\xi=\xi_d=2+\left({d-1} \atop {2}\right) = \frac{d^2 - 3d + 3}{2}$. The smallest value is $-a(d)$, where $a(d) = 2\cdot3^{d-2}$, and the largest value is $a(d)+3^{d-4}$, for $d\ge 4$. The 2nd to the $(d-1)$th numbers are $1,\ldots , 3^{d-3}$. The next entry is given by the sequence $1,4,13,40,121,364,1093,\ldots$, and it is $(3^{d - 1}-1)/2$. The next entry is $(3^{d - 1}-1)/2 + 6$ and then $(3^{d - 1}-1)/2+15$, $(3^{d - 1}-1)/2+17$, and so on.

We describe the elements in explicit formulas depending only on $d$ (the fractals that imitate the first difference sequence of fractals for $d=2,3,\ldots$ are indicated with @ symbol in case $d=12$. The $\star$ is where the fractals start. The line at (pt) is the penultimate top line. 

The beginning is: $\tau(0) = - 2\cdot3^{d-2}, \tau(1,d-2) = 3^i, \tau(d-1) = (3^{d - 1}-1)/2, \tau(d) = (3^{d - 1}-1)/2 + 6$. Here the sequence of sequences starts; each new sequence has one more element than the previous. The starting values are $\tau_i = \tau_{i-1} + 3^{i+1}$, for $i\in \{1, \ldots , d-4\}$ with $\tau_0 = \tau(d)$. Here: for $d = 3$, the sequence of sequences does not get started; for $d = 4$, it has one element; for $d > 4$ the starting values are defined. Now we define the sequneces. For  $d > 4$, he $i$th sequence is $\tau^i = \tau_i-1+3^0,\ldots ,\tau_i-1+3^{i}$. To check: the number of elements in the ith sequence is $i$; the final sequence is $x_{\xi-d+3+i} = a(n)+3^i$, for all $i\in \{0,\ldots ,d-4\}$.

The number of representatives of each element in the set satisfies a similar description as the $\tau$ sequences, and is indicated with []-brackets above for all $d\le 10$.

\begin{remark}
We observe that if one uses instead the reverse lexicographical order (building the ternary matrix from smaller instead of from larger indices), then we get similar, but more complicated ternary fractals. In this case, as far as we have been able to compute, the number of elements in the sets $2,3,5, \ldots$ satisfy SLOANE: https://oeis.org/A011826 :	$f$-vectors for simplicial complexes of dimension at most 1 (graphs) on at most $d-2$ vertices (prof. Svante Linusson, KTH, Stockholm). $a(d) = ((d-1)^3 - 3(d-1)^2 + 8(d-1) + 6)/6$. It is equal to the number of compressed (lexicographic order) simplicial complexes on $d-2$ variables and word length at most 2. Perhaps the fractals we study here also have interesting connections to simplicial complexes?
\end{remark}

\subsection*{Data}%\label{sec:data}
The base set is at $-2\cdot 3^{d-2}$ (baseline), differences between occurences; this set is obtained as the complement of the other elements at the same level, and then it is used for the next level to give the penultimate base line (pb): \\

\noindent[]\\

\noindent[3, 3]\\

\noindent[4, 3, 2, 3, 3, 2, 3, 4]\\

\noindent[5, 4, 3, 4, 3, 2, 3, 3, 2, 3, 2, 2, 3, 3, 2, 2, 3, 2, 3, 3, 2, 3, 4, 3, 4, 5]\\

\noindent[6, 5, 4, 5, 4, 3, 4, 4, 3, 4, 3, 3, 4, 3, 2, 3, 3, 3, 2, 3, 3, 2, 3, 2, 3, 3, 2, 3, 2, 2, 3, 2, 3, 2, 2, 3, 2, 2, 2, 3, 3, 2, 2, 2, 3, 2, 2, 3, 2, 3, 2, 2, 3, 2, 3, 3, 2, 3, 2, 3, 3, 2, 3, 3, 3, 2, 3, 4, 3, 3, 4, 3, 4, 4, 3, 4, 5, 4, 5, 6]\\

\noindent[7, 6, 5, 6, 5, 4, 5, 5, 4, 5, 4, 4, 5, 4, 3, 4, 4, 4, 3, 4, 4, 3, 4, 3, 4, 4, 3, 4, 3, 3, 4, 3, 4, 3, 3, 4, 3, 3, 3, 4, 3, 2, 3, 3, 3, 3, 2, 3, 3, 3, 2, 3, 3, 2, 3, 3, 3, 2, 3, 3, 2, 3, 2, 3, 3, 2, 3, 3, 2, 3, 2, 3, 3, 2, 3, 2, 3, 2, 3, 3, 2, 3, 2, 2, 3, 2, 3, 2, 3, 2, 2, 3, 2, 3, 2, 2, 3, 2, 2, 3, 2, 3, 2, 2, 3, 2, 2, 2, 3, 2, 2, 3, 2, 2, 2, 3, 2, 2, 2, 2, 3, 3, 2, 2, 2, 2, 3, 2, 2, 2, 3, 2, 2, 3, 2, 2, 2, 3, 2, 2, 3, 2, 3, 2, 2, 3, 2, 2, 3, 2, 3, 2, 2, 3, 2, 3, 2, 3, 2, 2, 3, 2, 3, 3, 2, 3, 2, 3, 2, 3, 3, 2, 3, 2, 3, 3, 2, 3, 3, 2, 3, 2, 3, 3, 2, 3, 3, 3, 2, 3, 3, 2, 3, 3, 3, 2, 3, 3, 3, 3, 2, 3, 4, 3, 3, 3, 4, 3, 3, 4, 3, 4, 3, 3, 4, 3, 4, 4, 3, 4, 3, 4, 4, 3, 4, 4, 4, 3, 4, 5, 4, 4, 5, 4, 5, 5, 4, 5, 6, 5, 6, 7]\\

\noindent[8, 7, 6, 7, 6, 5, 6, 6, 5, 6, 5, 5, 6, 5, 4, 5, 5, 5, 4, 5, 5, 4, 5, 4, 5, 5, 4, 5, 4, 4, 5, 4, 5, 4, 4, 5, 4, 4, 4, 5, 4, 3, 4, 4, 4, 4, 3, 4, 4, 4, 3, 4, 4, 3, 4, 4, 4, 3, 4, 4, 3, 4, 3, 4, 4, 3, 4, 4, 3, 4, 3, 4, 4, 3, 4, 3, 4, 3, 4, 4, 3, 4, 3, 3, 4, 3, 4, 3, 4, 3, 3, 4, 3, 4, 3, 3, 4, 3, 3, 4, 3, 4, 3, 3, 4, 3, 3, 3, 4, 3, 3, 4, 3, 3, 3, 4, 3, 3, 3, 3, 4, 3, 2, 3, 3, 3, 3, 3, 2, 3, 3, 3, 3, 2, 3, 3, 3, 2, 3, 3, 3, 3, 2, 3, 3, 3, 2, 3, 3, 2, 3, 3, 3, 2, 3, 3, 3, 2, 3, 3, 2, 3, 3, 3, 2, 3, 3, 2, 3, 3, 2, 3, 3, 3, 2, 3, 3, 2, 3, 2, 3, 3, 2, 3, 3, 2, 3, 3, 2, 3, 2, 3, 3, 2, 3, 3, 2, 3, 2, 3, 3, 2, 3, 2, 3, 3, 2, 3, 3, 2, 3, 2, 3, 3, 2, 3, 2, 3, 2, 3, 3, 2, 3, 2, 3, 3, 2, 3, 2, 3, 2, 3, 3, 2, 3, 2, 3, 2, 3, 2, 3, 3, 2, 3, 2, 2, 3, 2, 3, 2, 3, 2, 3, 2, 2, 3, 2, 3, 2, 3, 2, 2, 3, 2, 3, 2, 2, 3, 2, 3, 2, 3, 2, 2, 3, 2, 3, 2, 2, 3, 2, 2, 3, 2, 3, 2, 2, 3, 2, 3, 2, 2, 3, 2, 2, 3, 2, 3, 2, 2, 3, 2, 2, 3, 2, 2, 3, 2, 3, 2, 2, 3, 2, 2, 2, 3, 2, 2, 3, 2, 2, 3, 2, 2, 2, 3, 2, 2, 3, 2, 2, 2, 3, 2, 2, 2, 3, 2, 2, 3, 2, 2, 2, 3, 2, 2, 2, 2, 3, 2, 2, 2, 3, 2, 2, 2, 2, 3, 2, 2, 2, 2, 2, 3, 3, 2, 2, 2, 2, 2, 3, 2, 2, 2, 2, 3, 2, 2, 2, 3, 2, 2, 2, 2, 3, 2, 2, 2, 3, 2, 2, 3, 2, 2, 2, 3, 2, 2, 2, 3, 2, 2, 3, 2, 2, 2, 3, 2, 2, 3, 2, 2, 3, 2, 2, 2, 3, 2, 2, 3, 2, 3, 2, 2, 3, 2, 2, 3, 2, 2, 3, 2, 3, 2, 2, 3, 2, 2, 3, 2, 3, 2, 2, 3, 2, 3, 2, 2, 3, 2, 2, 3, 2, 3, 2, 2, 3, 2, 3, 2, 3, 2, 2, 3, 2, 3, 2, 2, 3, 2, 3, 2, 3, 2, 2, 3, 2, 3, 2, 3, 2, 3, 2, 2, 3, 2, 3, 3, 2, 3, 2, 3, 2, 3, 2, 3, 3, 2, 3, 2, 3, 2, 3, 3, 2, 3, 2, 3, 3, 2, 3, 2, 3, 2, 3, 3, 2, 3, 2, 3, 3, 2, 3, 3, 2, 3, 2, 3, 3, 2, 3, 2, 3, 3, 2, 3, 3, 2, 3, 2, 3, 3, 2, 3, 3, 2, 3, 3, 2, 3, 2, 3, 3, 2, 3, 3, 3, 2, 3, 3, 2, 3, 3, 2, 3, 3, 3, 2, 3, 3, 2, 3, 3, 3, 2, 3, 3, 3, 2, 3, 3, 2, 3, 3, 3, 2, 3, 3, 3, 3, 2, 3, 3, 3, 2, 3, 3, 3, 3, 2, 3, 3, 3, 3, 3, 2, 3, 4, 3, 3, 3, 3, 4, 3, 3, 3, 4, 3, 3, 4, 3, 3, 3, 4, 3, 3, 4, 3, 4, 3, 3, 4, 3, 3, 4, 3, 4, 3, 3, 4, 3, 4, 3, 4, 3, 3, 4, 3, 4, 4, 3, 4, 3, 4, 3, 4, 4, 3, 4, 3, 4, 4, 3, 4, 4, 3, 4, 3, 4, 4, 3, 4, 4, 4, 3, 4, 4, 3, 4, 4, 4, 3, 4, 4, 4, 4, 3, 4, 5, 4, 4, 4, 5, 4, 4, 5, 4, 5, 4, 4, 5, 4, 5, 5, 4, 5, 4, 5, 5, 4, 5, 5, 5, 4, 5, 6, 5, 5, 6, 5, 6, 6, 5, 6, 7, 6, 7, 8]\\

\end{document}